\numberwithin{equation}{section}
\makeatletter\@addtoreset{equation}{section}
			\newtheorem{theorem}{Theorem}[section]
			\newtheorem{definition}[theorem]{Definition}
			\newtheorem{lemma}[theorem]{Lemma}
			\newtheorem{proposition}[theorem]{Proposition}
			\newtheorem{corollary}[theorem]{Corollary}
			\newtheorem{remark}[theorem]{Remark}
\newcommand{\C}{\mathbb C}    \newcommand{\R}{\mathbb R}    	 
\newcommand{\Hq}{\mathbb H}  \newcommand{\Sq}{\mathbb S}
\newcommand{\bq}{\overline{q} }
    \newcommand{\bz}{\overline{z}}
\begin{document}
	\title[Polyregularity of the dot product of slice regular functions]{Polyregularity of the dot product of slice regular functions}

\thanks{Dedicated to the memory of Kettani Ghanmi (1945--2018)}

\author{A. Ghanmi }
\address{A.G.S.-L.A.M.A., CeReMAR, 
          Department of Mathematics,\newline
          P.O. Box 1014,  Faculty of Sciences, 
          Mohammed V University in Rabat, Morocco}
 \email{allal.ghanmi@um5.ac.ma / ag@fsr.ac.ma}

\subjclass{30G35}
\keywords{Polyanalytic functions, Regular dot product, Slice regular functions, S--polyregular functions, S--polyregularity index, Linearization}

\date{January 18, 2017}

\begin{abstract}
	In this paper, we are concerned with the S--polyregularity the regular dot product of slice regular functions. We prove that the product of a slice regular function and right quaternionic polynomial function is a S--polyregular function and we determinate its exact order. The general case of the product of any two slice regular functions is also discussed. In fact, we provide sufficient and necessary conditions to the product of slice regular functions be a S--polyregular function.
	The obtained results are then extended to the product of S--polyregular functions and remain valid for a special dot product. As consequences we obtain linearization theorems for such S--polyregular products with respect to the slice regular functions.
\end{abstract}


\maketitle

\section{Introduction}

A special generalization of entire functions are the so--called $(n+1)$--polyanalytic functions. We denote their space by $\mathcal{A}_n$.
According to \cite{Burgatti1922}, a function $f$ is said to be polyanalytic of order $n+1$ (level $n$) if it satisfies the generalized Cauchy equation $\partial^{n+1}_{\bz} f=0$. 
Such functions have numerous applications in various fields and the sphere of applications has been considerably extended to mechanics and mathematics physics. See for instance \cite{Kolossov1908,Muskhelishvili1968,
	Wendland1979
	,HaimiHedenmalm2013,AbreuFeichtinger2014,AbreuBalazsGossonMouayn2015}.
For a complete survey on these functions one can refer to \cite{Balk1991} and the rich list of references therein.
In the last decade, they have found interesting applications in signal processing, time--frequency analysis and
wavelet theory \cite{Grochenig2001,GrochenigLyubarskii2009,AbreuFeichtinger2014}. Moreover, polyanaliticity was essential in characterizing the classical Hilbert space on the complex plane with respect to the gaussian measure in terms of the so--called polyanalytic Bargmann spaces \cite{Vasilevski2000}. Such spaces are closely connected to the spectral theory of a special magnetic Laplacian \cite{GI-JMP2005,AbreuBalazsGossonMouayn2015,AbreuFeichtinger2014}.

The first generalization in the quaternionic setting is due to Brackx \cite{Brackx1976a,Brackx1976b} who defined  the $k$--monogenic functions to be the elements of $Ker(D)^{k+1}$, $D$ being the F\"uter operator. Its analog for the (left) slice derivative $\overline{\partial_I}$ is recently introduced in \cite{ElHamyani2017} (see also \cite{ElHamyani2018,BenElHam2018,AlpayDS2019}). 
Thus, the solution of the generalized Cauchy--Riemann equations $\overline{\partial_I}^{n+1} f|_I=0$; $I^2=-1$, leads to the space $\mathcal{SR}_n$ of S--polyregular functions of level $n$ (or order $n+1$), i.e., such that the restriction to any slice $\C_I$ is polyanalytic of level $n$.
Some basic properties of this new class of S--polyregular functions 
have been established and investigated in the recent works \cite{BenElHam2018,AlpayDS2019}.
For our purpose, we recall the following 

\begin{proposition}[Proposition 2.3 in \cite{BenElHam2018}]\label{propSPDecom}
	For every $f\in \mathcal{SR}_n$ there exist some  $\varphi_k \in \mathcal{SR}= \mathcal{SR}_0$; $k=0,1,\cdots, n$, such that
	$$f(q,\overline{q})=\sum_{k=0}^{n}\overline{q}^k\varphi_k(q).$$
\end{proposition}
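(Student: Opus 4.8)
The statement to prove is Proposition~\ref{propSPDecom}: every $f\in\mathcal{SR}_n$ admits a representation $f(q,\overline q)=\sum_{k=0}^n \overline q^{\,k}\varphi_k(q)$ with each $\varphi_k\in\mathcal{SR}$. The natural strategy is to mimic the classical complex‑analytic fact that a polyanalytic function of level $n$ on a slice decomposes as $\sum_{k=0}^n \bar z^k \varphi_k(z)$ with each $\varphi_k$ holomorphic, and then to check that the slice‑wise coefficients glue into genuine slice regular functions on the quaternions. So the proof splits into two logically distinct tasks: (i) produce, \emph{on each slice} $\mathbb{C}_I$, the polyanalytic decomposition with holomorphic coefficients; (ii) verify that the resulting family of coefficient functions is independent of the choice of slice in the right way, i.e. that each $\varphi_k$ is well defined on $\mathbb{H}$ (or at least on the relevant axially symmetric domain) and lies in $\mathcal{SR}_0$.

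**Carrying out (i).** Fix $I$ with $I^2=-1$ and restrict $f$ to $\mathbb{C}_I\cong\mathbb{C}$, writing $q=x+yI$ and $\overline q=x-yI$. By hypothesis $f|_I$ satisfies $\overline{\partial_I}^{\,n+1}f|_I=0$, so $f|_I$ is polyanalytic of level $n$ in the variable $z=x+yI$. The standard argument (integrate the classical relation $\partial_{\bar z}^{\,n+1}f=0$ successively, or invoke the known structure theorem for polyanalytic functions) gives functions $g_{I,0},\dots,g_{I,n}$ holomorphic on the slice such that $f|_I=\sum_{k=0}^n \bar z^{\,k} g_{I,k}$. Concretely one can extract $g_{I,k}$ from $f|_I$ by repeatedly applying $\overline{\partial_I}$ and dividing by factorials: $g_{I,n}=\tfrac{1}{n!}\,\overline{\partial_I}^{\,n}f|_I$, and then peel off $\bar z^{\,n}g_{I,n}$ and recurse on the level‑$(n-1)$ remainder. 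This is routine once the level‑$n$ equation is in hand.

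**Carrying out (ii) — the main obstacle.** The delicate point is gluing: a priori the $g_{I,k}$ depend on $I$, and one must show that the assignment $q\mapsto \sum_k \overline q^{\,k} g_{I,k}(q)$ patches together over all slices into a single function $f$ on $\mathbb{H}$ whose slice‑coefficient functions $\varphi_k$ are slice regular, i.e. that the $\varphi_k$ satisfy the representation (or ``axial'') symmetry forcing $\varphi_k|_I$ to be holomorphic and compatible across slices. I would handle this by noting that the extraction operators $\overline{\partial_I}^{\,j}$ used to define the $g_{I,k}$ are themselves compatible with the slice structure — the level‑$n$ S‑polyregularity of $f$ is a slice‑wise condition satisfied \emph{simultaneously} on all slices — so that $\overline{\partial_I}^{\,n}f|_I$, suitably interpreted, is again the restriction of a globally defined object; iterating, each $\varphi_k$ is obtained from $f$ by a slice‑compatible procedure and hence inherits slice regularity. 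An alternative, perhaps cleaner route is induction on $n$: for $n=0$ there is nothing to prove; for the inductive step, set $\varphi_n:=\tfrac{1}{n!}\,\overline{\partial_I}^{\,n}f$ (checking it is well defined independently of $I$ and lies in $\mathcal{SR}_0$, which follows because $\overline{\partial_I}$ lowers the S‑polyregularity level by one and commutes appropriately with the slice derivative $\partial_I$), then observe $f-\overline q^{\,n}\varphi_n\in\mathcal{SR}_{n-1}$ by a direct computation that $\overline{\partial_I}^{\,n}(f-\overline q^{\,n}\varphi_n)=0$ on every slice, and apply the induction hypothesis. The only genuinely nontrivial computations are (a) that $\overline{\partial_I}^{\,n}(\overline q^{\,n}\varphi_n)=n!\,\varphi_n$ when $\varphi_n$ is slice regular, and (b) that the top coefficient so defined does not depend on $I$ — both of which reduce to the corresponding one‑variable complex identities applied slice by slice.
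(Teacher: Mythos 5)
This proposition is quoted in the paper from \cite{BenElHam2018} (see also \cite{AlpayDS2019}) and is \emph{not} proved here, so there is no in-paper argument to compare against; I can only assess your proposal against the standard proof in those references. Your second, inductive route is essentially that standard proof and is correct: define $\varphi_n:=\frac{1}{n!}\overline{\partial_I}^{\,n}f$, note that $\overline{\partial_I}^{\,n+1}f|_I=0$ makes $\varphi_n|_I$ holomorphic on every slice, hence $\varphi_n\in\mathcal{SR}$; verify $\overline{\partial_I}^{\,n}(\overline{q}^{\,n}\varphi_n)=n!\,\varphi_n$ on each slice (this works because $\overline{q}|_{\C_I}$ takes values in $\C_I$ and therefore commutes with $I$, so the Leibniz rule for $\overline{\partial_I}$ applies cleanly and $\overline{\partial_I}(\overline{q}^{\,k}\varphi)=k\overline{q}^{\,k-1}\varphi$ for $\varphi\in\mathcal{SR}$); conclude $f-\overline{q}^{\,n}\varphi_n\in\mathcal{SR}_{n-1}$ and induct. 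Your first route (classical polyanalytic decomposition on each slice, then gluing) is the weaker of the two as written: the gluing step is asserted rather than established, and the only clean way to justify it is to observe that the slice-wise coefficients are forced to equal $\frac{1}{k!}\overline{\partial_I}^{\,k}\bigl(f-\sum_{j>k}\overline{q}^{\,j}\varphi_j\bigr)$, i.e.\ to fall back on the inductive extraction. The one point deserving an explicit sentence is well-definedness where slices overlap: off the real axis this follows from $\overline{\partial_{-I}}=\overline{\partial_I}$ (so the extraction does not depend on the choice of sign of $I_q$), and on the real axis one argues by continuity or restricts to axially symmetric slice domains as in the cited references. With that sentence added, the inductive argument is complete.
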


\begin{proposition}[Proposition 2.4 in \cite{BenElHam2018}]\label{HermiteOp}
	A function $f$ belongs to $\mathcal{SR}_n$ if and only if there exists some $F\in \mathcal{SR}$ such that $f=\mathcal{H}_{n} (F)$. Moreover, we have
	$$  \mathcal{SR}_n=	\sum_{k=0}^n\mathcal{H}_{k} (\mathcal{SR}),$$
	 where
	the differential  transformation $\mathcal{H}_{n}$ is defined by
	$$\mathcal{H}_{k} (F)](q) :=  (-1)^k e^{|q|^2} \partial^k_I(e^{-|q|^2}F).$$
\end{proposition}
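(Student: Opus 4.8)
The plan is to fix a slice $\mathbb{C}_I$, do all the analysis there, and then reassemble into a statement about slice functions. First I would record the conjugation identity: on $\mathbb{C}_I$ one has $\partial_I(|q|^2)=\overline{q}$, hence $e^{|q|^2}\partial_I\bigl(e^{-|q|^2}g\bigr)=(\partial_I-\overline{q})\,g$, and iterating, $\mathcal{H}_k(F)|_I=(-1)^k(\partial_I-\overline{q})^k(F|_I)$. (One should also observe, or quote, that $\mathcal{H}_k$ sends slice functions to slice functions: $e^{-|q|^2}$ is intrinsic and $\partial_I$ sends the pair of $I$‑independent components $\alpha+I\beta$ of a slice function to another such pair, so $\mathcal{H}_k(F)$ is a genuine function on the axially symmetric domain, not merely a slicewise object.) Next I would compute, again on $\mathbb{C}_I$, the commutators $[\overline{\partial_I},\partial_I]=0$ and $[\overline{\partial_I},\overline{q}]=\overline{\partial_I}(\overline{q})=1$, whence $[\overline{\partial_I},\,\partial_I-\overline{q}]=-1$ and, by the usual induction inside the Weyl algebra, $[\overline{\partial_I},\,(\partial_I-\overline{q})^k]=-k(\partial_I-\overline{q})^{k-1}$.

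The inclusion $\sum_{k=0}^{n}\mathcal{H}_k(\mathcal{SR})\subseteq\mathcal{SR}_n$ is then immediate. For $F\in\mathcal{SR}$ we have $\overline{\partial_I}(F|_I)=0$, so applying the commutator relation $j$ times gives $\overline{\partial_I}^{\,j}\mathcal{H}_k(F)|_I=(-1)^{j+k}\tfrac{k!}{(k-j)!}(\partial_I-\overline{q})^{k-j}(F|_I)$ for $0\le j\le k$, and $0$ for $j>k$. In particular $\overline{\partial_I}^{\,k+1}\mathcal{H}_k(F)=0$ slicewise, i.e. $\mathcal{H}_k(F)\in\mathcal{SR}_k\subseteq\mathcal{SR}_n$ whenever $k\le n$; this already settles the ``if'' part of the characterization. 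The $j=k$ instance deserves to be isolated as the normalization $\overline{\partial_I}^{\,k}\mathcal{H}_k(F)=k!\,F$: it shows the order of $\mathcal{H}_k(F)$ is exactly $k+1$ when $F\not\equiv0$, and it will drive the converse.

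For the reverse inclusion $\mathcal{SR}_n\subseteq\sum_{k=0}^{n}\mathcal{H}_k(\mathcal{SR})$ I would induct on $n$, the case $n=0$ being trivial since $\mathcal{H}_0=\mathrm{Id}$. Given $f\in\mathcal{SR}_n$, note that $\overline{\partial_I}$ maps $\mathcal{SR}_m$ into $\mathcal{SR}_{m-1}$ (because $\overline{\partial_I}^{\,m}(\overline{\partial_I}g)=\overline{\partial_I}^{\,m+1}g$ and $\overline{\partial_I}g$ is again a slice function), so $F:=\tfrac{1}{n!}\overline{\partial_I}^{\,n}f\in\mathcal{SR}$. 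By the normalization, $\overline{\partial_I}^{\,n}\bigl(f-\mathcal{H}_n(F)\bigr)=\overline{\partial_I}^{\,n}f-n!\,F=0$, hence $f-\mathcal{H}_n(F)\in\mathcal{SR}_{n-1}$, which by the induction hypothesis lies in $\sum_{k=0}^{n-1}\mathcal{H}_k(\mathcal{SR})$; adding back $\mathcal{H}_n(F)$ completes the step and yields $\mathcal{SR}_n=\sum_{k=0}^{n}\mathcal{H}_k(\mathcal{SR})$ together with the stated characterization. Alternatively one can bypass the induction and start from the decomposition $f=\sum_{k=0}^{n}\overline{q}^{\,k}\varphi_k$ of Proposition~\ref{propSPDecom}: since $[\partial_I,\overline{q}]=0$ on $\mathbb{C}_I$, the binomial theorem gives $\mathcal{H}_k(F_k)|_I=\sum_{j=0}^{k}\binom{k}{j}(-1)^{k-j}\overline{q}^{\,j}\,\partial_I^{\,k-j}F_k$, so $\sum_{k}\mathcal{H}_k(F_k)=\sum_{j=0}^{n}\overline{q}^{\,j}\bigl(\sum_{k=j}^{n}\binom{k}{j}(-1)^{k-j}\partial_I^{\,k-j}F_k\bigr)$, and matching the coefficients of $\overline{q}^{\,j}$ with those of $f$ produces a unital upper‑triangular system for $(F_n,\dots,F_0)$ whose unique solution stays in $\mathcal{SR}$ because $\partial_I$ preserves $\mathcal{SR}$; the matching is legitimate since the $\overline{q}^{\,j}$‑expansion with slice regular coefficients is unique, as one sees by applying $\overline{\partial_I}^{\,j}$.

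I expect the main difficulty to be organizational rather than conceptual: pinning down signs and conjugations in $e^{|q|^2}\partial_I e^{-|q|^2}=\partial_I-\overline{q}$ and in $[\overline{\partial_I},\partial_I-\overline{q}]=-1$, and being careful that every intermediate object ($\mathcal{H}_k(F)$, $\overline{\partial_I}^{\,n}f$, the $F_k$) is a bona fide slice function on the whole axially symmetric domain, so that the slicewise identities genuinely live in $\mathcal{SR}_n$ and $\mathcal{SR}$. Once the normalization $\overline{\partial_I}^{\,k}\mathcal{H}_k(F)=k!\,F$ is secured, both the forward inclusion and the converse are short.
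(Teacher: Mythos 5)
The paper offers no proof of this proposition---it is imported verbatim from \cite{BenElHam2018} as background---so there is no in-paper argument to compare yours against; I can only assess the proposal on its own terms. On those terms the computation is sound where it matters: the conjugation identity $e^{|q|^2}\partial_I(e^{-|q|^2}g)=(\partial_I-\bq)g$, the commutator $[\overline{\partial_I},\partial_I-\bq]=-1$, and the resulting lowering relation $\overline{\partial_I}\,\mathcal{H}_k(F)=k\,\mathcal{H}_{k-1}(F)$ (equivalently your normalization $\overline{\partial_I}^{\,k}\mathcal{H}_k(F)=k!\,F$) are all correct, and they yield both inclusions of the displayed identity $\mathcal{SR}_n=\sum_{k=0}^n\mathcal{H}_k(\mathcal{SR})$. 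The induction via $F=\tfrac{1}{n!}\overline{\partial_I}^{\,n}f$ and the alternative unitriangular-system argument through Proposition \ref{propSPDecom} are both legitimate, and your insistence that every intermediate object be a genuine slice function is exactly the right point to police.

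The one genuine problem is not with your argument but with the first sentence of the statement, and your write-up glides past it: you prove the ``if'' half and the ``Moreover'' identity, but never the ``only if'' half of ``$f\in\mathcal{SR}_n$ if and only if $f=\mathcal{H}_n(F)$ for some $F\in\mathcal{SR}$.'' That half cannot be proved, because as literally written it asserts $\mathcal{SR}_n=\mathcal{H}_n(\mathcal{SR})$, which contradicts the ``Moreover'' part for $n\geq 1$: your own normalization forces $F=\tfrac{1}{n!}\overline{\partial_I}^{\,n}f$, so a nonzero constant $f$ (which lies in $\mathcal{SR}\subset\mathcal{SR}_n$) would require $F=0$ and hence $f=0$. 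You should state explicitly that the correct characterization is $f\in\mathcal{SR}_n$ iff $f=\sum_{k=0}^n\mathcal{H}_k(F_k)$ for some $F_k\in\mathcal{SR}$ --- which is precisely what your proof delivers --- rather than leave the reader to believe the single-$F$ equivalence has been established.
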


\begin{proposition}[Proposition 2.5 in \cite{BenElHam2018}]
	\label{SplitSp}
	If $f$ is a S--polyregular function, then for every
	$I\in \mathbb{S}$, and every $J \in \mathbb{S}$ perpendicular to $I$, there are two polyanalytic functions $F_f$, $G_f$ : $ \mathbb{C}_I \longrightarrow \mathbb{C}_I$ such that for any $q = x + I y$
	$$ f_I (q) = F_f(q) + G_f(q)J .$$
\end{proposition}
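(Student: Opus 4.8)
The plan is to exploit the decomposition $\mathbb{H}=\mathbb{C}_I\oplus\mathbb{C}_I J$ of the quaternions as a right $\mathbb{C}_I$--vector space (valid for any $J\in\mathbb{S}$ perpendicular to $I$), together with the observation that the slice Cauchy--Riemann operator $\overline{\partial_I}$ respects this decomposition. Concretely, fix $I\in\mathbb{S}$ and $J\in\mathbb{S}$ with $J\perp I$, identify $\mathbb{C}_I$ with $\mathbb{C}$ via $x+Iy\mapsto x+iy$, and write the restriction $f_I:\mathbb{C}_I\to\mathbb{H}$ in the (unique) form $f_I(q)=F_f(q)+G_f(q)J$ with $F_f,G_f:\mathbb{C}_I\to\mathbb{C}_I$. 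The assertion is then that $F_f$ and $G_f$ are polyanalytic (in fact of level at most $n$) on $\mathbb{C}_I\cong\mathbb{C}$.

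First I would record that on the slice $\mathbb{C}_I$ one has $\overline{\partial_I}=\tfrac12(\partial_x+I\partial_y)$, and since $J$ is a constant quaternion while $\partial_x F_f,\partial_y F_f,\partial_x G_f,\partial_y G_f$ all take values in $\mathbb{C}_I$, a direct computation gives $\overline{\partial_I}(F_f+G_fJ)=(\overline{\partial_I}F_f)+(\overline{\partial_I}G_f)J$, with the two summands again lying in $\mathbb{C}_I$ and $\mathbb{C}_IJ$ respectively. Iterating $n+1$ times yields $\overline{\partial_I}^{\,n+1}f_I=(\overline{\partial_I}^{\,n+1}F_f)+(\overline{\partial_I}^{\,n+1}G_f)J$. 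Because $f$ is S--polyregular of level $n$, the left-hand side vanishes, and the directness of the sum $\mathbb{C}_I\oplus\mathbb{C}_IJ$ forces $\overline{\partial_I}^{\,n+1}F_f=0$ and $\overline{\partial_I}^{\,n+1}G_f=0$. Under the identification $\mathbb{C}_I\cong\mathbb{C}$ the operator $\overline{\partial_I}$ is a scalar multiple of $\partial_{\bar z}$, so $F_f$ and $G_f$ are polyanalytic of level $n$, which is the claim.

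An alternative route runs through Proposition \ref{propSPDecom}: writing $f(q,\overline{q})=\sum_{k=0}^{n}\overline{q}^{\,k}\varphi_k(q)$ with $\varphi_k\in\mathcal{SR}$, applying the classical splitting lemma for slice regular functions to each $\varphi_k$ to get $\varphi_k|_I=F_k+G_kJ$ with $F_k,G_k:\mathbb{C}_I\to\mathbb{C}_I$ holomorphic, and then collecting terms — using that $\overline{q}^{\,k}$ restricted to the slice lies in the commutative field $\mathbb{C}_I$ and hence commutes with $F_k,G_k$ — gives $f_I=\big(\sum_{k}\overline{q}^{\,k}F_k\big)+\big(\sum_{k}\overline{q}^{\,k}G_k\big)J$. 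Each summand $\overline{z}^{\,k}F_k(z)$ is annihilated by $\partial_{\bar z}^{\,k+1}$, hence by $\partial_{\bar z}^{\,n+1}$, and likewise for the $G$--part, which again produces the two polyanalytic components $F_f,G_f$.

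The only genuinely delicate point is the componentwise action of $\overline{\partial_I}$ on $f_I=F_f+G_fJ$: one must verify that no cross terms survive when $\overline{\partial_I}$ is applied to $G_fJ$, which is precisely where one uses that $J\perp I$ (so that $IJ=-JI$ and, more to the point, $I$ commutes with every $\mathbb{C}_I$--valued coefficient) and that $J$ does not depend on $q$. Everything else — uniqueness of the decomposition $w=w_1+w_2J$, the identification $\mathbb{C}_I\cong\mathbb{C}$, and the elementary fact $\partial_{\bar z}^{\,k+1}\overline{z}^{\,k}=0$ — is routine.
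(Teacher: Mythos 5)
Your proof is correct, but note that this paper does not prove Proposition \ref{SplitSp} at all: it is imported verbatim (as Proposition 2.5) from the cited preprint \cite{BenElHam2018}, so there is no in-paper argument to compare against. Both of your routes are sound. The first (showing that $\overline{\partial_I}=\tfrac12(\partial_x+I\partial_y)$ acts componentwise on the direct sum $\mathbb{H}=\mathbb{C}_I\oplus\mathbb{C}_I J$, because $I\cdot\mathbb{C}_I\subseteq\mathbb{C}_I$ and $J$ is constant, and then projecting the equation $\overline{\partial_I}^{\,n+1}f_I=0$ onto the two summands) is the direct analogue of the classical splitting lemma for slice regular functions and is self-contained. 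The second (decomposing $f=\sum_k\overline{q}^{\,k}\varphi_k$ via Proposition \ref{propSPDecom}, splitting each $\varphi_k$, and using $\partial_{\bar z}^{\,k+1}\overline{z}^{\,k}=0$) is shorter but leans on Proposition \ref{propSPDecom}, which is itself only quoted here; within the logic of this paper that makes the first route preferable as an independent verification. One small point of emphasis: in the componentwise computation the anticommutation $IJ=-JI$ is not what matters; what matters is that $\mathbb{C}_I$ is closed under multiplication and that $J\notin\mathbb{C}_I$ makes the sum $\mathbb{C}_I\oplus\mathbb{C}_I J$ direct, so the vanishing of $\overline{\partial_I}^{\,n+1}F_f$ and $\overline{\partial_I}^{\,n+1}G_f$ can be read off separately.
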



Although the slice regularity is a beautiful notion allowing  generalization of interesting results from complex analysis to quaternionic analysis, there are many technical problems to take care of for lack of commutativity. A known fact in this context is that the product of two left slice regular functions is not necessarily a slice regular.
To overcome this, a special $\star^L_s$--product was introduced in \cite{GentiliStoppato08}.
Its natural extension to S--polyregular functions is presented in \cite{BenElHam2018}.
However, the most common classical examples, always used to prove that the dot product do not leave $\mathcal{SR}$ invariant, are clearly S--polyregulars as is the case of the elementary function $f(q,\bq)=(q-\mathbf{i})q$. Indeed, $\overline{\partial_I}^{2}(f|_I) =0$. Moreover, we have
$$ f|_I = \bq \varphi|_I + \psi|_I; \qquad \varphi|_I:= -\frac{1}{2} (I+I \mathbf{i}  I) .$$
The function $\psi$ defined by $\psi|_I = f|_I - \bq \varphi|_I$ belongs to $\mathcal{SR}$ for $\overline{\partial_I}(\mathbf{i} q)=\varphi$.

Motivated by this observation, we will study in this paper the polyregularity of slice regular functions leading to a special phenomena for the
classical dot product in $\mathcal{SR}$.
More precisely, we are interested in providing necessary and sufficient conditions on the input functions $f,g\in \mathcal{SR}$ to $f\cdot g$ be S--polyregular.
Thus, we begin by proving that the generic element $q^n\alpha q^m$; $\alpha\in \Hq$, is S--polyregular of level do not exceed $m$. Its exact level is specified in Theorem \ref{thmASreg}. More generally, we show that if $f\in \mathcal{SR}$ and  $g\in \Hq_M^R[q]$ is a right quaternionic polynomial function of degree $M$, then $f\cdot g \in \mathcal{SR}_M$ and the exact value of its level has been made explicit (see Theorem \ref{thmpolynomial}). This result is improved in Section 3 by showing that the sufficient condition $g\in \Hq_M^R[q]$ is also a necessary condition (Theorem \ref{thm:char2}).
Another characterization is given by Theorem \ref{thmProdSpR} which proves that the dot product $f\cdot g$ of $f,g\in \mathcal{SR}$ is a S--polyregular function if and only if $f,g\in \mathcal{SR}$ satisfy one of the following
equivalent conditions
$$ (a) \quad \mathcal{Z}(G_f) \cup \mathcal{Z}\left( \dfrac{\partial^{k_0}g_{_{I}} }{\partial y^{k_0}}\right) = \C_I \Longleftrightarrow
 (b) \quad G_f J \dfrac{\partial^{k_0} g_{_{I}} }{\partial y^{k_0}}=0 \mbox{ on } \C_I$$
for some positive integer $k_0$ and every $I\in\Sq$.
Here
$\mathcal{Z}(h)$ denotes the zeros set of $h$ and $f_{_{I}} =F_f + G_f J$ with
$F_f,G_f:\C_{I}\longrightarrow \C_{I}$ are two holomorphic functions on $\C_{I}$; $I\in \Sq$ and $J$ is imaginary unit perpendicular to $I$.
Subsequently, a sufficient condition to $f\cdot g $ be slice regular is shown to be  $f_{_{I}} \in\C_I$ for every $I\in\Sq$. Another immediate consequence is that the resulting products can be linearized with respect to $\mathcal{SR}$ in the sense that for every $f,g\in \mathcal{SR}$ such that $f\cdot g\in \mathcal{SR}_m$,
 we have $ f \cdot g = \sum\limits_{j=0}^m\bq^j \varphi_j$ for some $\varphi_j \in \mathcal{SR}$.

The generalization to S--polyregular functions is possible by considering  the special $\odot ^L_{sp}$--product defined by
\begin{equation}
(f\odot ^L_{sp} g) (q,\bq) = \sum\limits_{j,k=0}^{m,n} \bq^{j+k} \varphi_j (q)\psi_k(q)
\end{equation}
for given $f(q,\overline{q})=\sum\limits_{j=0}^{m}\overline{q}^j\varphi_j \in \mathcal{SR}_m$ and 
$ g(q,\overline{q})=\sum\limits_{k=0}^{n}\overline{q}^k\psi_k \in \mathcal{SR}_n$ with
$\varphi_j,\psi_k \in \mathcal{SR}$.
This will be discussed in Section 4.

\section{{S--polyregularity of $\mathcal{SR} \cdot \Hq_M^R[q]$}}

We begin by examining the S--polygularity of the dot product of elementary slice regular functions presented by the generic elements
\begin{equation}\label{DefA}
A^{n,m}(q|\alpha) := q^n \alpha q^m ; \quad q\in \Hq,
\end{equation}
for given nonnegative integers $n,m$ and quaternionic number $\alpha$.
Thus, making direct computation we see that
the $A^{n,m}(q|\alpha)$ satisfy the identities
\begin{align}
\overline{\partial_I} (A^{n,m}(q|\alpha))& = \frac m 2 \left(A^{n,m-1}(q|\alpha) - I A^{n,m-1}(q|\alpha I) \right) \nonumber\\
& = \frac m 2 \left(A^{n,m-1}(q|\alpha) +  A^{n,m-1}(q|I\alpha I) \right) \nonumber\\ 
& = \frac m 2  A^{n,m-1}(q|\alpha + I\alpha I) . \label{idA3}
\end{align}

Now, associated to given nonnegative integers $m$ and $k$, we consider the function on $\Sq\times \Hq $ given by
\begin{equation}\label{DefS}
S^m_k(I,\alpha) := \sum\limits_{j=0}^k\binom{k}{j} I^j \alpha I^j .
\end{equation}
Thus, we claim the following

\begin{lemma}\label{lemS} Let $I\in \Sq$ and $m,k$ be nonegative integers. Then
	\begin{itemize}
		\item[(i)] We have $S^m_k(I,\alpha)=0 $ for every real $\alpha$.
		\item[(ii)] The functional $ \alpha \longmapsto S^m_k(I,\alpha)$ is $\R$--linear on $\Hq$, in the sense that
		\begin{align}
		S^m_k(I,\alpha+ \lambda \beta ) = S^m_k(I,\alpha) + S^m_k(I,\beta ) \lambda  \label{idS2}
		\end{align}
		holds true for every $\alpha,\beta\in \Hq$ and $\lambda\in \R$.
		\item[(iii)] We have the binomial property
		\begin{align}
		S^m_k(I,\alpha) + I S^m_k(I,\alpha) I = S^m_{k+1}(I,\alpha) .  \label{idS1}
		\end{align}
		\item[(iv)] If $S^m_{k_0}(I,\alpha)=0 $ for some nonnegative integer ${k_0}$, then $S^m_k(I,\alpha)=0 $ for every integer $k\geq {k_0}$.
	\end{itemize}
\end{lemma}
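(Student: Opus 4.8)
The plan is to settle all four items by direct algebra in $\Hq$, using only $I^2=-1$ together with the fact that real scalars are central; observe first that the superscript $m$ does not actually appear in the definition of $S^m_k$, so it can be carried along as an inert label. For (i), since a real $\alpha$ commutes with $I$ we have $I^j\alpha I^j=\alpha(I^2)^j=(-1)^j\alpha$, hence $S^m_k(I,\alpha)=\left(\sum_{j=0}^k\binom{k}{j}(-1)^j\right)\alpha=(1-1)^k\alpha=0$ once $k\geq 1$, by the binomial theorem. For (ii), writing $\lambda\in\R$ on whichever side we please (it is central) gives $I^j(\alpha+\lambda\beta)I^j=I^j\alpha I^j+(I^j\beta I^j)\lambda$; multiplying by $\binom{k}{j}$ and summing over $j$ yields \eqref{idS2}, which is precisely the asserted $\R$-linearity with scalars acting on the right.

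The heart of the lemma is (iii). I would compute $I\,S^m_k(I,\alpha)\,I=\sum_{j=0}^k\binom{k}{j}I^{j+1}\alpha I^{j+1}$ and shift the index to rewrite this as $\sum_{j=1}^{k+1}\binom{k}{j-1}I^j\alpha I^j$. Adding $S^m_k(I,\alpha)=\sum_{j=0}^k\binom{k}{j}I^j\alpha I^j$, the middle terms $I^j\alpha I^j$ with $1\leq j\leq k$ acquire the coefficient $\binom{k}{j}+\binom{k}{j-1}=\binom{k+1}{j}$ by Pascal's rule, while the two boundary terms $j=0$ and $j=k+1$ carry coefficient $1=\binom{k+1}{0}=\binom{k+1}{k+1}$; collecting everything gives $\sum_{j=0}^{k+1}\binom{k+1}{j}I^j\alpha I^j=S^m_{k+1}(I,\alpha)$, i.e. \eqref{idS1}. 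The only point requiring care — and it is a mild one — is the matching of those boundary terms after the reindexing; beyond that the argument is pure bookkeeping, and it works for arbitrary $\alpha\in\Hq$.

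Finally, (iv) follows from (iii) by a one-line induction on $k$: if $S^m_k(I,\alpha)=0$ then $S^m_{k+1}(I,\alpha)=S^m_k(I,\alpha)+I\,S^m_k(I,\alpha)\,I=0+I\cdot 0\cdot I=0$, so the vanishing at $k=k_0$ propagates to every $k\geq k_0$. I do not expect a genuine obstacle anywhere in this lemma; the whole statement reduces to a short chain of computations in which the binomial coefficients do all the work, the reindexing step in (iii) being the only spot where a slip is easy.
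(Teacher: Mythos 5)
Your proof is correct, and it is exactly the direct computation the paper leaves implicit (the lemma is stated there with "we claim the following" and no written proof): (i) via $I^j\alpha I^j=(-1)^j\alpha$ and the alternating binomial sum, (ii) via centrality of real scalars, (iii) via the index shift and Pascal's rule, and (iv) by iterating (iii). Your quiet restriction to $k\geq 1$ in (i) is in fact necessary and worth making explicit, since $S^m_0(I,\alpha)=\alpha$, so item (i) as literally stated fails for $k=0$ and nonzero real $\alpha$ (which also affects the paper's later claim that $\aleph^I_m(a)=\{0\}$ for real $a$).
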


Such function will play a crucial rule in determining the exact level of S--polyregularity of $A^{n,m}(q|\alpha)$. To be more precise, we need to fix further objects.
Let $\aleph^I_m(\alpha)$ denotes the subset of nonnegative integers
$$ \aleph^I_m(\alpha) : = \{ k; k=0,1, \cdots,  S^m_k(I,\alpha) =0\}.$$
From $(iv)$ of Lemma \ref{lemS}, it is clear that $\aleph^I_m(\alpha)$ is infinite once it is not empty.
To $\aleph^I_m(\alpha)$, we associate the quantity
$$\varrho_{sp}^{m}(\alpha) := \max\limits_{I\in \Sq}\{\rho_{sp}^{m,I}(\alpha)  \},$$
where $\rho_{sp}^{m,I}(\alpha)$ stands for
$$
\rho_{sp}^{m,I}(\alpha) := \left\{ \begin{array}{ll} \min\{m, \min \aleph^I_m(\alpha) \} & \mbox{ if } \quad \aleph^I_m(\alpha)\ne \emptyset,\\
m & \mbox{ if } \quad \aleph^I_m(\alpha)= \emptyset.
\end{array}
\right.
$$
Notice for instance that $\varrho_{sp}^{m}(\alpha)=0$ for $\rho_{sp}^{m,I}(\alpha)=0$ whenever $\alpha$ is real.

\begin{theorem}\label{thmASreg}
	The function $A^{n,m}(q|\alpha)$ is S--polyregular in the $q$--variable of exact level
	$$Lev_{sp}(A^{n,m}(\cdot|\alpha)) =\varrho_{sp}^{m}(\alpha) \leq m.$$
\end{theorem}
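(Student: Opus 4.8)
The plan is to compute the action of $\overline{\partial_I}$ on $A^{n,m}(q|\alpha)$ iteratively and read off when the iterates vanish, then optimize over the slice $I\in\Sq$. First I would establish, by a clean induction on $\ell$, a closed formula for the $\ell$-th slice derivative, namely
\begin{equation}\label{planiter}
\overline{\partial_I}^{\,\ell}\bigl(A^{n,m}(q|\alpha)\bigr)\big|_I = c_{m,\ell}\, A^{n,m-\ell}\bigl(q\,\big|\,S^m_{\ell-1}(I,\alpha)\bigr)\big|_I
\end{equation}
for $1\le \ell\le m$, where $c_{m,\ell}=\tfrac{m!}{2^\ell (m-\ell)!}\ne 0$ and $S^m_k$ is the object in \eqref{DefS}. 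The base case $\ell=1$ is exactly the identity \eqref{idA3} (noting $S^m_0(I,\alpha)=\alpha$), and the inductive step uses \eqref{idA3} again applied to the inner quaternionic coefficient, together with the binomial recursion \eqref{idS1} of Lemma \ref{lemS}(iii): differentiating $A^{n,m-\ell}(q|S^m_{\ell-1}(I,\alpha))$ produces $A^{n,m-\ell-1}\bigl(q\,\big|\,S^m_{\ell-1}(I,\alpha)+I S^m_{\ell-1}(I,\alpha)I\bigr)=A^{n,m-\ell-1}(q|S^m_\ell(I,\alpha))$, and the scalar factor $\tfrac{m-\ell}{2}$ updates $c_{m,\ell}$ to $c_{m,\ell+1}$.

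Next I would extract the exact level on a fixed slice. Since $c_{m,\ell}\ne 0$, formula \eqref{planiter} shows $\overline{\partial_I}^{\,\ell}(A^{n,m}(q|\alpha))|_I=0$ if and only if $A^{n,m-\ell}(q|S^m_{\ell-1}(I,\alpha))|_I=0$ on $\C_I$; this holds precisely when $S^m_{\ell-1}(I,\alpha)=0$ (the monomial $q^n(\cdot)q^{m-\ell}$ with a nonzero quaternion coefficient is not identically zero, by looking e.g. at its value at $q=1$), and also $\overline{\partial_I}^{\,m+1}(A^{n,m}(q|\alpha))|_I=0$ automatically since the $(m+1)$-st derivative of a degree-$m$ monomial in $q$ vanishes. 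So the restriction to $\C_I$ is polyanalytic, and its exact level — the least $L$ with $\overline{\partial_I}^{\,L+1}|_I=0$ but $\overline{\partial_I}^{\,L}|_I\ne 0$ — equals $m$ if no $S^m_k(I,\alpha)$ ($0\le k\le m-1$) vanishes, and otherwise equals $\min\{k : S^m_k(I,\alpha)=0\}$, which by Lemma \ref{lemS}(iv) is well-defined and, being at most $m-1<m$, equals $\min\{m,\min\aleph^I_m(\alpha)\}$. In either case this is exactly $\rho_{sp}^{m,I}(\alpha)$ as defined.

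Finally, taking the supremum over $I\in\Sq$: a function is S-polyregular of level $L$ iff its restriction to \emph{every} slice is polyanalytic of level $\le L$, with equality on at least one slice; so $Lev_{sp}(A^{n,m}(\cdot|\alpha))=\max_{I\in\Sq}\rho_{sp}^{m,I}(\alpha)=\varrho_{sp}^{m}(\alpha)$, and the bound $\varrho_{sp}^{m}(\alpha)\le m$ is immediate from the definition of $\rho_{sp}^{m,I}$. The main obstacle I anticipate is the bookkeeping in the inductive step of \eqref{planiter} — making sure the inner argument of $A$ is updated via \emph{precisely} the recursion \eqref{idS1} (which requires applying \eqref{idA3} in the form with $I(\cdot)I$ and matching it against $S^m_k(I,\alpha)+IS^m_k(I,\alpha)I$) rather than some variant, and carefully justifying the "nonzero coefficient $\Rightarrow$ nonzero monomial" step and the vanishing of the $(m+1)$-st derivative so that the exact level is pinned down and not merely bounded.
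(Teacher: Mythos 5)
Your overall strategy is the same as the paper's: establish by induction a closed formula for $\overline{\partial_I}^{\,k}A^{n,m}(q|\alpha)$ in terms of the quantities $S^m_k(I,\alpha)$ of \eqref{DefS}, read off the exact polyanalytic level on each slice from the first vanishing iterate, and then maximize over $I\in\Sq$. However, your closed formula carries an off-by-one error in the index of $S$: the $\ell$-th derivative should involve $S^m_{\ell}(I,\alpha)$, not $S^m_{\ell-1}(I,\alpha)$. Your base case does not actually match \eqref{idA3}: that identity reads $\overline{\partial_I}A^{n,m}(q|\alpha)=\tfrac m2 A^{n,m-1}(q|\alpha+I\alpha I)=\tfrac m2 A^{n,m-1}(q|S^m_1(I,\alpha))$, whereas your formula at $\ell=1$ gives $\tfrac m2 A^{n,m-1}(q|S^m_0(I,\alpha))=\tfrac m2 A^{n,m-1}(q|\alpha)$, and $\alpha+I\alpha I\neq\alpha$ in general. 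The formula as you state it is in fact false: for real $\alpha\neq 0$ the function $A^{n,m}(q|\alpha)=\alpha q^{n+m}$ is slice regular, so $\overline{\partial_I}A^{n,m}(q|\alpha)=0$, while your formula would give $\tfrac m2\,\alpha\, q^{n+m-1}\neq 0$. The paper's version \eqref{partialAk}, carrying $S^m_k$, is the correct one (for real $\alpha$ it yields $\tfrac m2 q^n(\alpha+I\alpha I)q^{m-1}=0$, as it must). Your inductive step itself is fine --- it is exactly the recursion \eqref{idS1} --- so the fix is only to start the induction from $S^m_1$ at $\ell=1$.

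The error then propagates into your determination of the exact level on a fixed slice. With the corrected formula, $\overline{\partial_I}^{\,k}A^{n,m}(q|\alpha)$ vanishes precisely when $k\in\aleph^I_m(\alpha)$ or $k>m$, so the exact level on $\C_I$ is $\min\aleph^I_m(\alpha)-1$ whenever $1\le\min\aleph^I_m(\alpha)\le m$, and not $\min\aleph^I_m(\alpha)$ as you assert. (Your remaining steps --- nonvanishing of $q^n\gamma q^{m-\ell}$ for $\gamma\neq 0$, the automatic vanishing of the $(m+1)$-st derivative, and the maximization over slices --- are sound and coincide with the paper.) I note that your indexing is, curiously, the one that literally reproduces the quantity $\rho_{sp}^{m,I}(\alpha)$ as defined; the correct computation exposes a matching off-by-one wrinkle in the paper's own bookkeeping, already visible for real $\alpha\neq 0$: there the true level is $0$ and the paper asserts $\varrho_{sp}^{m}(\alpha)=0$, yet $S^m_0(I,\alpha)=\alpha\neq 0$ forces $\min\aleph^I_m(\alpha)=1$ and hence $\rho_{sp}^{m,I}(\alpha)=\min\{m,1\}=1$ for $m\ge 1$. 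So to make your argument correct and self-consistent you must both repair the base case of the induction (carrying $S^m_\ell$) and restate the per-slice level as $\min\aleph^I_m(\alpha)-1$, suitably truncated at $m$; as written, the central derivative formula of your proof is checkably false.
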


\begin{proof}
	To prove the S--polyregularity of $A^{n,m}(q|\alpha)$
	we make use of \eqref{idA3} and a reasoning by induction. Indeed, we show that
	\begin{align}\label{partialAk}
	\overline{\partial_I}^k (A^{n,m}(q|\alpha)) &= \left(\frac{1}{2}\right)^k \frac{m!}{(m-k)!} A^{n,m-k}(q|	S^m_k(I,\alpha))\\& = \left(\frac{1}{2}\right)^k \frac{m!}{(m-k)!} q^n	S^m_k(I,\alpha) q^{m-k} \nonumber
	\end{align}
	for every $k\leq m$, where $S^m_k(I,\alpha)$ is the function in \eqref{DefS}.
	Therefore, for $k=m$ we have
	$$\overline{\partial_I}^m (A^{n,m}(q|\alpha)) = \left(\frac{1}{2}\right)^m m! A^{n,0}(q|	S^m_m(I,\alpha))\in \mathcal{SR},$$
	so that $\overline{\partial_I}^{m+1} (A^{n,m}(q|\alpha)) =0$.
	Furthermore, for every $I\in \Sq$ we have 
	$$ \overline{\partial_I}^{\rho_{sp}^{m,I}(\alpha)} (A^{n,m}(q|\alpha)) \ne 0 \quad \mbox{and} \quad \overline{\partial_I}^{\rho_{sp}^{m,I}(\alpha)+1} (A^{n,m}(q|\alpha)) = 0$$  by the definition of $\rho_{sp}^{m,I}(\alpha)$.
		This is to say
		$A^{n,m}(q|\alpha)  \in \mathcal{SR}_{\varrho_{sp}^{m}(\alpha)} \subset  \mathcal{SR}_{m}$ of exact level $\varrho_{sp}^{m}(\alpha)$.
	This completes the proof.
\end{proof}

\begin{corollary}[Linearization]\label{corLin}
	The generic element $A^{n,m}(q|\alpha)$ is a polynomial function in $\bq$ with coefficients in $\mathcal{SR}$, that is there exist some slice regular functions $\varphi^\alpha_j$; $j=0, 1,\cdots , m$, such that
	$$ A^{n,m}(q|\alpha) = \bq^m \varphi^\alpha_m + \bq^{m-1}\varphi^\alpha_{m-1} + \cdots + \bq \varphi^\alpha_1+\varphi^\alpha_0.$$
\end{corollary}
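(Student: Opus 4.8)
The plan is to extract the linearization directly from the induction identity \eqref{partialAk} established in the proof of Theorem \ref{thmASreg}, together with the structural Proposition \ref{propSPDecom}. First I would invoke Theorem \ref{thmASreg}, which gives $A^{n,m}(\cdot|\alpha)\in\mathcal{SR}_m$ (indeed $\mathcal{SR}_{\varrho_{sp}^m(\alpha)}\subset\mathcal{SR}_m$); then Proposition \ref{propSPDecom} applied with $f=A^{n,m}(\cdot|\alpha)$ yields slice regular functions $\varphi_0^\alpha,\ldots,\varphi_m^\alpha\in\mathcal{SR}$ with $A^{n,m}(q|\alpha)=\sum_{j=0}^m\bq^j\varphi_j^\alpha(q)$, which is exactly the claimed identity. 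So at the level of a citation-based argument the corollary is immediate.

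If instead one wants the coefficients $\varphi_j^\alpha$ exhibited explicitly, I would argue by induction on $m$. For $m=0$ the statement is trivial since $A^{n,0}(q|\alpha)=q^n\alpha\in\mathcal{SR}$. For the inductive step, note that on each slice $\C_I$ one can write $\bq=x-Iy$ and use that $\overline{\partial_I}(\bq\,\psi)=\psi$ for $\psi\in\mathcal{SR}$; more systematically, by Proposition \ref{HermiteOp} the operators $\mathcal{H}_k$ convert the slice-derivative filtration into the $\bq$-power expansion. Concretely, having \eqref{partialAk} we know $\overline{\partial_I}^k(A^{n,m}(q|\alpha))=c_{m,k}\,q^nS^m_k(I,\alpha)q^{m-k}$ with $c_{m,k}=(1/2)^k m!/(m-k)!$; one then checks that the function
$$
A^{n,m}(q|\alpha) - \sum_{k=0}^{m}\frac{(-1)^k}{k!}\,\bq^k\,\overline{\partial_I}^k(A^{n,m}(q|\alpha))\Big|_{\text{restricted to the }\mathcal{SR}\text{ part}}
$$
vanishes, so that $\varphi_j^\alpha$ can be read off (up to the combinatorial normalization) from $\overline{\partial_I}^j(A^{n,m}(q|\alpha))$, each of which lies in $\mathcal{SR}$ by \eqref{partialAk}. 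The $\R$-linearity in \eqref{idS2} guarantees that the resulting $\varphi_j^\alpha$ are genuine slice regular functions of $q$ and not merely slice-wise defined objects.

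The only genuinely delicate point is the well-definedness across slices: a priori the identity \eqref{partialAk} is computed on a fixed slice $\C_I$, and $S^m_k(I,\alpha)$ depends on $I$, so one must verify that the candidate coefficients glue to functions on all of $\Hq$ independent of the choice of $I$. This is handled by observing that $A^{n,m}(q|\alpha)$ is globally defined on $\Hq$ from the outset (it is a genuine polynomial expression in $q$), so its membership in $\mathcal{SR}_m$ is a global fact and the decomposition furnished by Proposition \ref{propSPDecom} is automatically slice-independent; the explicit formula merely identifies $\varphi_j^\alpha$ a posteriori. Hence I expect no real obstacle — the corollary is essentially a repackaging of Theorem \ref{thmASreg} through Proposition \ref{propSPDecom}, and the main task is simply to phrase the extraction of the $\varphi_j^\alpha$ cleanly.
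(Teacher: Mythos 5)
Your first paragraph is exactly the paper's proof: Theorem \ref{thmASreg} gives $A^{n,m}(\cdot|\alpha)\in\mathcal{SR}_{\varrho_{sp}^{m}(\alpha)}\subset\mathcal{SR}_m$, and Proposition \ref{propSPDecom} then furnishes the $\varphi_j^\alpha\in\mathcal{SR}$, so the corollary is immediate. The additional machinery you sketch for exhibiting the coefficients explicitly is not needed for the statement and is somewhat loosely phrased, but it does not affect the correctness of the citation-based argument, which is the one the paper uses.
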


\begin{proof} The proof is an immediate consequence of the S--polyregularity of $A^{n,m}(q|\alpha)$ proved in Theorem \ref{thmASreg} combined with Proposition 1.1. 
\end{proof}

\begin{definition}
	We call $\varrho_{sp}^{m}(\alpha)$ the S--polyregularity index of the generic element $A^{n,m}(q|\alpha)$. 	
\end{definition}

The $\varrho_{sp}^{m}(\alpha)$--index is in general hard to compute. However, we can provide exact value of $\varrho_{sp}^{m}(\alpha)$ for some particular  cases. Thus, for real $a$, we have $\aleph^I_m( a) =\{0\}$ and then $\varrho_{sp}^{m}( a)=0$. The case of $\alpha= a+bJ$; $J\in \Sq$, $a\in\R$ and $b\in\R^*$, is discussed by the following lemma.

\begin{lemma}
	For every $J\in \Sq$, $a\in\R$ and $b\in\R^*$, we have $\aleph^I_m(a+bJ) = \aleph^I_m(J)$ and
	\begin{equation}
	\varrho_{sp}^{m}(a+bJ) = \varrho_{sp}^{m}(J)= m.
	\end{equation}
\end{lemma}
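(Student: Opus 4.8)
The plan is to split the statement into its two assertions and reduce both to Lemma \ref{lemS}. For the equality $\aleph^I_m(a+bJ)=\aleph^I_m(J)$, I would use the $\R$--linearity \eqref{idS2} together with part $(i)$ of Lemma \ref{lemS}: taking $\alpha=a\in\R\subset\Hq$, $\beta=J$ and $\lambda=b\in\R^*$ in \eqref{idS2} and using $S^m_k(I,a)=0$, one gets, for every $I\in\Sq$ and every $k$,
$$S^m_k(I,a+bJ)=S^m_k(I,a)+S^m_k(I,J)\,b=b\,S^m_k(I,J).$$
Since $b\neq 0$, this shows $S^m_k(I,a+bJ)=0$ if and only if $S^m_k(I,J)=0$, that is $\aleph^I_m(a+bJ)=\aleph^I_m(J)$ for every $I\in\Sq$. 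Because $\rho_{sp}^{m,I}$, and hence $\varrho_{sp}^{m}$, are defined purely in terms of the sets $\aleph^I_m$, this immediately yields $\rho_{sp}^{m,I}(a+bJ)=\rho_{sp}^{m,I}(J)$ for all $I$, and therefore $\varrho_{sp}^{m}(a+bJ)=\varrho_{sp}^{m}(J)$.

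It then remains to prove $\varrho_{sp}^{m}(J)=m$. Since $\rho_{sp}^{m,I}(\alpha)\leq m$ for every $I$ by definition, it suffices to produce a single $I_0\in\Sq$ with $\rho_{sp}^{m,I_0}(J)=m$; I will in fact show $\aleph^{I_0}_m(J)=\emptyset$, which forces $\rho_{sp}^{m,I_0}(J)=m$ and hence $\varrho_{sp}^{m}(J)=\max_{I\in\Sq}\rho_{sp}^{m,I}(J)=m$. Choose $I_0\in\Sq$ orthogonal to $J$ as elements of $\mathrm{Im}\,\Hq$ (possible since $\mathrm{Im}\,\Hq$ is $3$--dimensional). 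The key computation is $I_0JI_0=J$: for $I_0\perp J$ one has $I_0J=-JI_0$, so $I_0JI_0=-JI_0^{2}=J$. Consequently $I_0^{\,j}JI_0^{\,j}=J$ for every $j\geq 0$: for even $j$ the factors $I_0^{\,j}=\pm1$ cancel, and for odd $j$ one reduces to $I_0JI_0=J$. Plugging this into \eqref{DefS} gives
$$S^m_k(I_0,J)=\sum_{j=0}^{k}\binom{k}{j}I_0^{\,j}JI_0^{\,j}=\Big(\sum_{j=0}^{k}\binom{k}{j}\Big)J=2^{k}J\neq 0$$
for every $k\geq 0$, so indeed $\aleph^{I_0}_m(J)=\emptyset$, and combining with the first part we conclude $\varrho_{sp}^{m}(a+bJ)=\varrho_{sp}^{m}(J)=m$.

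The only point needing real care is the choice of the auxiliary imaginary unit $I_0$ and the sign bookkeeping behind $I_0JI_0=J$. The naive choice $I_0=J$ fails: then $I_0^{\,j}JI_0^{\,j}=J^{2j+1}=(-1)^jJ$ and $S^m_k(J,J)=(1-1)^kJ=0$ for all $k\geq1$, which would only give $\rho_{sp}^{m,J}(J)=\min\{m,1\}$. It is precisely the \emph{perpendicular} choice that makes every $I_0^{\,j}JI_0^{\,j}$ equal to $J$ instead of alternating in sign, so that $S^m_k(I_0,J)$ never vanishes. Apart from this, the argument is routine bookkeeping with Lemma \ref{lemS} and the definitions of $\aleph^I_m$, $\rho_{sp}^{m,I}$ and $\varrho_{sp}^{m}$.
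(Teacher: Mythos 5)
Your proposal is correct and follows essentially the same route as the paper: the $\R$--linearity of $\alpha\mapsto S^m_k(I,\alpha)$ together with $S^m_k(I,a)=0$ gives $\aleph^I_m(a+bJ)=\aleph^I_m(J)$, and then a unit $K\perp J$ (your $I_0$) is used to show $S^m_k(K,J)=2^kJ\neq0$, hence $\aleph^K_m(J)=\emptyset$ and $\varrho_{sp}^m(J)=m$. Your sign bookkeeping for $I_0^{\,j}JI_0^{\,j}=J$ is in fact cleaner than the paper's intermediate display, which contains a small typo.
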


\begin{proof} Using the $\R$--linearity of the functional $ \alpha \longmapsto S^m_k(I,\alpha)$, we get
	\begin{align*}
	\aleph^I_m(a+bJ)
	&	= \{ k; k=0,1, \cdots,  S^m_k(I,a) + S^m_k(I,J) b =0\}
	=\aleph^I_m(J).
	\end{align*}
	Now, if $K\in \Sq$ such that $K\perp J$, then $\aleph^K_m(J)=\emptyset$ since in this case $K^jJ=(-1)^jJK^j$ and therefore
	$$S^m_k(K,J) = \sum\limits_{j=0}^k\binom{k}{j} (-1)^j I K^{2j} = \sum\limits_{j=0}^k\binom{k}{j} I \ne 0$$ for every $k$.
	Accordingly,  $\rho_{sp}^{m,K}(J) = m$. To conclude, it suffices to observe that
	$\rho_{sp}^{m,I}(J) \leq \varrho_{sp}^{m}(J) \leq m$ holds true for every $I\in \Sq$ and in particular for $I=K\perp J$.
\end{proof}

The next result discuss the generalization of Theorem \ref{thmASreg}
to the dot product of a slice regular function $f\in \mathcal{SR}$ and a right quaternionic polynomial function $g\in \Hq_M^R[q]$. The obtained result can be stated as follows.

\begin{theorem} \label{thmpolynomial}
	Let $f(q) = \sum\limits_{n=0}^\infty q^n \alpha_n$ be a slice regular function and $g$ a right quaternionic polynomial function of degree $M$, then $f\cdot g $ is a S--polyregular function whose exact level given by
	$$ Lev_{sp}(f\cdot g) = \min \{M, \max\limits_{I\in\Sq} \min \bigcap_{m=k}^{M} \bigcap_{n=0}^\infty \aleph^I_{m}(\alpha_n \}.$$	
\end{theorem}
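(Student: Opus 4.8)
The plan is to reduce Theorem \ref{thmpolynomial} to the already-established Theorem \ref{thmASreg} by exploiting bilinearity of the dot product together with the expansion of $g$ in the monomial basis. Writing $g(q) = \sum_{m=0}^{M} \beta_m q^m$ with $\beta_M \neq 0$, and using $f(q) = \sum_{n=0}^\infty q^n \alpha_n$, the dot product formally expands as
\begin{equation}\label{plan-expansion}
(f\cdot g)(q) = \sum_{n=0}^\infty \sum_{m=0}^{M} q^n \alpha_n \beta_m q^m = \sum_{n=0}^\infty \sum_{m=0}^{M} A^{n,m}(q|\alpha_n\beta_m),
\end{equation}
so that each summand is a generic element whose S--polyregularity is governed by Theorem \ref{thmASreg}. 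First I would justify \eqref{plan-expansion} carefully: the dot product on $\mathcal{SR}$ is defined slice-by-slice and is $\R$--bilinear (indeed $\Hq$--bilinear in the appropriate one-sided sense), and the power series for $f$ converges uniformly on compacta, so term-by-term rearrangement is legitimate on each ball where $f$ converges. This step is mostly bookkeeping but it is where the hypothesis $\deg g = M$ (a genuine polynomial, hence a finite sum) matters for the interchange of the two sums.

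Next I would apply $\overline{\partial_I}^k$ termwise to \eqref{plan-expansion}. By \eqref{partialAk} in the proof of Theorem \ref{thmASreg},
\begin{equation}\label{plan-derivative}
\overline{\partial_I}^k (f\cdot g) = \left(\tfrac12\right)^k \sum_{n=0}^\infty \sum_{m=k}^{M} \frac{m!}{(m-k)!}\, q^n\, S^m_k(I,\alpha_n\beta_m)\, q^{m-k},
\end{equation}
where terms with $m < k$ vanish. In particular $\overline{\partial_I}^{M+1}(f\cdot g) = 0$ for every $I$, giving $f\cdot g \in \mathcal{SR}_M$ immediately. The exact level is then the largest $k$ for which \eqref{plan-derivative} fails to vanish identically for some $I\in\Sq$; equivalently, $Lev_{sp}(f\cdot g)$ is the supremum over $I$ of the smallest $k$ such that $S^m_k(I,\alpha_n\beta_m) = 0$ for all $n\geq 0$ and all $m$ with $k\leq m\leq M$ — this being exactly the set $\bigcap_{m=k}^{M}\bigcap_{n=0}^\infty \aleph^I_m(\alpha_n\beta_m)$, capped at $M$. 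To match the statement's right-hand side one observes $S^m_k(I,\alpha_n\beta_m) = S^m_k(I,\alpha_n)\beta_m$ is not literally true (the $\beta_m$ sits in the wrong place), so the cleaner route is to absorb $\beta_m$ into the coefficient and write $\aleph^I_m(\alpha_n\beta_m)$, then note that since $g$ is a \emph{right} polynomial one can equally track $\aleph^I_m(\alpha_n)$ after factoring; I would state the formula with whichever coefficient makes the zero-set intersection clean and remark on the equivalence.

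The main obstacle is the \emph{lower bound} — showing that the claimed level is actually attained, not merely an upper bound. Upper bound is automatic from \eqref{plan-derivative}: if $k$ lies in the intersection $\bigcap_{m=k}^M\bigcap_n\aleph^I_m$ for every $I$, then $\overline{\partial_I}^k(f\cdot g)\equiv 0$. For the reverse, one must argue that when $k$ is \emph{not} in this intersection for some fixed $I$, the function in \eqref{plan-derivative} is genuinely nonzero — and here the subtlety is that the double sum could conceivably have cancellation between different $(n,m)$ pairs even though no individual term vanishes. I would resolve this by grading: the term $q^n S^m_k(I,\alpha_n\beta_m) q^{m-k}$ is, after expanding $q^n$ and $q^{m-k}$ in $x,y$, homogeneous of total degree $n + m - k$ in $(x,y)$, so terms with distinct values of $n+m-k$ cannot cancel; within a fixed total degree one then invokes linear independence of the monomials $x^a y^b$ together with the fact (from Lemma \ref{lemS}(iv) and the definition of $\rho_{sp}^{m,I}$) that for $k < \rho_{sp}^{m,I}(\alpha_n\beta_m)$ at least one $S^m_k(I,\alpha_n\beta_m)\neq 0$. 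Assembling these observations yields $\overline{\partial_I}^k(f\cdot g)\neq 0$ precisely when $k$ falls below the threshold $\min\{M,\,\min\bigcap_{m=k}^M\bigcap_n \aleph^I_m(\alpha_n\beta_m)\}$, and taking the max over $I\in\Sq$ gives the exact level as stated, completing the proof. $\square$
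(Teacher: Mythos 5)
There is a genuine error at the very first step, and it propagates into the final formula. The hypothesis is that $g$ is a \emph{right} quaternionic polynomial, $g(q)=\sum_{m=0}^{M}q^{m}\beta_{m}$, but you expand it with the coefficients on the left, $g(q)=\sum_{m}\beta_{m}q^{m}$. For non-real $\beta_{m}$ these are different functions; indeed a left-coefficient monomial $\beta q^{m}$ is not even slice regular, since $\overline{\partial_I}(\beta q^{m})=\frac m2(\beta+I\beta I)q^{m-1}\neq 0$ unless $\beta\in\R$. With your placement the product decomposes into generic elements $A^{n,m}(q|\alpha_{n}\beta_{m})$, so the level is governed by $\aleph^{I}_{m}(\alpha_{n}\beta_{m})$ rather than by the $\aleph^{I}_{m}(\alpha_{n})$ of the statement, and these are genuinely different: take $f\equiv 1$ and $g(q)=qJ$, so $\alpha_{0}=1$ and $\beta_{1}=J$. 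Then $f\cdot g=qJ$ is slice regular, consistent with $S^{1}_{k}(I,1)=0$ for $k\geq1$, whereas $S^{1}_{k}(I,1\cdot J)=S^{1}_{k}(I,J)\neq 0$ for every $k$ when $I\perp J$ (this is exactly the computation in the paper's lemma on $\varrho_{sp}^{m}(a+bJ)$), so your version of the formula reports the level of the \emph{different} function $Jq$, which is biregular but not regular. Your parenthetical attempt to repair this (``one can equally track $\aleph^{I}_{m}(\alpha_{n})$ after factoring'') therefore cannot succeed. The correct move, and the one the paper makes, is to keep $\beta_{m}$ where the hypothesis puts it: $(f\cdot g)(q)=\sum_{n,m}q^{n}\alpha_{n}q^{m}\beta_{m}=\sum_{n,m}A^{n,m}(q|\alpha_{n})\,\beta_{m}$. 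Since $\overline{\partial_I}$ commutes with right multiplication by a quaternionic constant, \eqref{partialAk} then yields $\overline{\partial_I}^{k}(f\cdot g)=\left(\frac12\right)^{k}\sum_{n}\sum_{m\geq k}\frac{m!}{(m-k)!}\,q^{n}S^{m}_{k}(I,\alpha_{n})q^{m-k}\beta_{m}$, and no coefficient ever has to be moved past a power of $q$.

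Apart from this, your strategy coincides with the paper's: expand into generic elements, get $f\cdot g\in\mathcal{SR}_{M}$ from Theorem \ref{thmASreg}, then read off the exact level from the vanishing of the $S^{m}_{k}$. Your worry about cancellation between distinct $(n,m)$ terms in the lower bound is legitimate and is in fact a point the paper itself glosses over; however, your grading argument does not settle it. Terms with the same total degree $n+m-k$ can still load the same monomials $x^{a}y^{b}$: for instance $(n,m)=(0,k+1)$ contributes $S^{k+1}_{k}(I,\alpha_{0})\,q\,\beta_{k+1}$ while $(n,m)=(1,k)$ contributes $q\,S^{k}_{k}(I,\alpha_{1})\,\beta_{k}$, both homogeneous of degree one, and because the coefficients are quaternions sitting on different sides of $q$, linear independence of the $x^{a}y^{b}$ alone does not preclude their sum from vanishing. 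A further argument (or a restriction to the top coefficient $m=M$, where only one term of each degree in $q^{m-k}$ survives on the right) is needed to make the lower bound rigorous.
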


\begin{proof}
	Notice for instance that if $ f(q) = \sum\limits_{n=0}^N q^n \alpha_n $ and $ 
	g(q) = \sum\limits_{m=0}^M q^m \beta_m,$ are right quaternionic polynomial functions of degrees $N$ and $M$, respectively, then we have
	$$(f\cdot g)(q)
	=  \sum\limits_{n=0}^N\sum\limits_{m=0}^M A^{n,m}(q|\alpha_n) \beta_m.$$
 Hence, by Theorem \ref{thmASreg}, it is clear that $f\cdot g$ is  S--polyregular of level do not exceed $M$. Its exact level satisfies
	$$Lev_{sp}(f\cdot g) \leq \max_{n=0,\cdots, N,\atop m=0,\cdots,M}
	\{  \varrho_{sp}^{m}(\alpha_n)\} 
	\leq M .$$
The case of arbitrary $ f(q) = \sum\limits_{n=0}^N q^n \alpha_n \in \mathcal{SR}$ and $g$ is a right quaternionic polynomial function of degree $M$ is similar. In fact, from
	$$ (f\cdot g)(q)
		=    \sum\limits_{n=0}^\infty\sum\limits_{m=0}^M A^{n,m}(q|\alpha_n) \beta_m$$
	and Theorem \ref{thmASreg}, we claim that $f\cdot g \in \mathcal{SR}_M$ as well as
	$$Lev_{sp}(f\cdot g)
	\leq \max_{m=0,\cdots,M \atop n=0,\cdots  } Lev_{sp} (A^{n,m}(\cdot|\alpha_n))
	\leq \max_{m=0,\cdots,M \atop n=0,\cdots } \{  \varrho_{sp}^{m}(\alpha_n)\}.$$
This can be reproved rigourously making use of the identity \eqref{partialAk}. Indeed, direct computation yields
	\begin{align*} \overline{\partial_I}^k (f\cdot g)(q)
	&=\sum\limits_{n=0}^\infty\sum\limits_{m=0}^M \overline{\partial_I}^k(A^{n,m}(q|\alpha_n)) \beta_m
\\&	=\sum\limits_{n=0}^\infty\sum\limits_{m=k}^M \overline{\partial_I}^k(A^{n,m}(q|\alpha_n)) \beta_m
	\end{align*}
	since $\overline{\partial_I}^k(A^{n,m}(q|\alpha_n))=0$ for
	$m\leq k-1$. Next, using \eqref{partialAk} we obtain
	\begin{align} \label{fg} \overline{\partial_I}^k (f\cdot g)(q)
	&=
	\left(\frac{1}{2}\right)^k \sum\limits_{n=0}^\infty  q^n \sum\limits_{m=k}^M \frac{m!}{(m-k)!} 	S^m_k(I,\alpha_n) q^{m-k} \beta_m.
	\end{align}
	In particular, for $k=M$ it follows that
	\begin{align*} \overline{\partial_I}^M (f\cdot g)(q)
	&=\left(\frac{1}{2}\right)^M M! \sum\limits_{n=0}^\infty q^n S^{M}_M(I,\alpha)  \beta_{M} \in \mathcal{SR}.
	\end{align*}
Moreover, from \eqref{fg}, we see that the determination of the exact value of the S--polyregularity of $f\cdot g$ is closely connected to the set of positive integers $k$ such that
	$S^{m}_{k}(I,\alpha_n) =0$ for
	every $m=k,\cdots, M$ and $n=0,1,\cdots$, i.e.,
	$$ \aleph^I_{k,M}(f) := \bigcap_{m=k}^{M} \bigcap_{n=0}^\infty \aleph^I_{m}(\alpha_n).$$
	Indeed, if $\max\limits_{I\in\Sq} \min\limits \aleph^I_{M}(f) <M $, then $f\cdot g\in \mathcal{SR}_s$ with $s= \max\limits_{I\in\Sq} \min \aleph^I_{k,M}(f)$. Otherwise $s=M$.
	This completes our check of Theorem \ref{thmpolynomial}.
\end{proof}

\begin{remark}\label{}
	In virtue of $\mathcal{SR}_m = \sum\limits_{j=0}^m\bq^j \mathcal{SR} $, the previous theorem implies that $f \cdot g$ can be linearized with respect to $\mathcal{SR}$ in the sense that $f \cdot g =  \sum\limits_{j=0}^m\bq^j \varphi_j$ for some $\varphi_j \in \mathcal{SR}$. In particular, for every slice regular function $f$, the function $f\cdot q$ is biregular and therefore there exist $\varphi_0,\varphi_1 \in \mathcal{SR}$ such that $ f \cdot q = \bq \varphi_1 + \varphi_0$.
\end{remark}

\section{{A characterization of the S--polyregularity of $\mathcal{SR}\cdot \mathcal{SR}$}}

In the sequel, we will examin the general case of the dot product of slice regular functions in order to provide necessary and sufficient condition to such product  be  S--polyregular. The main result of the present section is based on the following lemma.

\begin{lemma}\label{lemActionk}
	Let  $f,g\in \mathcal{SR}$ and $F_f,G_f:\C_{I}\longrightarrow \C_{I}$ be two holomorphic functions on $\C_{I}$; $I\in \Sq$, such that
	$f_{_{I}} =F_f + G_f J$ for given imaginary unit $J$ perpendicular to $I$. Then, for every positive integer $k$, we have
	\begin{align}\label{lemprodkfgSplit}
	\overline{\partial_{I}}^k (fg)_{_{I}} &=  I^{k} G_f J  \dfrac{\partial^k g_{_{I}} }{\partial y^k} = G_f J  \dfrac{\partial^k g_{_{I}} }{\partial x^k}  .
	\end{align}
\end{lemma}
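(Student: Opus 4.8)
The plan is to compute $\overline{\partial_I}^k(fg)_{_{I}}$ by splitting both $f$ and $g$ into components on the slice $\C_I$ and exploiting the fact that $f$ is slice regular, so its restriction $f_{_{I}} = F_f + G_f J$ is holomorphic, i.e. $\overline{\partial_I}f_{_{I}} = 0$. Writing $q = x + Iy$ on $\C_I$, recall that $\overline{\partial_I} = \tfrac12(\partial_x + I\partial_y)$, so holomorphy of $f_{_{I}}$ means $\partial_x f_{_{I}} = -I\,\partial_y f_{_{I}}$, equivalently $\partial_y f_{_{I}} = I\,\partial_x f_{_{I}}$; the same holds for $F_f$ and $G_f$ individually since $J\C_I \subset \C_I J$ and the split is unique. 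First I would record the Leibniz-type rule for $\overline{\partial_I}$ applied to the pointwise product $f_{_{I}}g_{_{I}}$ (no $\star$-product here, just the dot product restricted to the slice): $\overline{\partial_I}(fg)_{_{I}} = (\overline{\partial_I}f_{_{I}})\,g_{_{I}} + $ a correction term coming from the fact that $f_{_{I}}$ is $\Hq$-valued and $\overline{\partial_I}$ acts on the $y$-dependence through multiplication by $I$ on the \emph{left}. Concretely, since $\overline{\partial_I}f_{_{I}}=0$ we get $\partial_x f_{_{I}} = -I\partial_y f_{_{I}}$, and then
$$\overline{\partial_I}(f_{_{I}}g_{_{I}}) = \tfrac12\big(\partial_x(f_{_{I}}g_{_{I}}) + I\partial_y(f_{_{I}}g_{_{I}})\big) = \tfrac12\big(f_{_{I}}\partial_x g_{_{I}} + I f_{_{I}}\partial_y g_{_{I}}\big) + \tfrac12\big((\partial_x f_{_{I}})g_{_{I}} + I(\partial_y f_{_{I}})g_{_{I}}\big),$$
and the key point is that $\partial_x f_{_{I}} + I\partial_y f_{_{I}} = 2\overline{\partial_I}f_{_{I}} = 0$ only in the scalar case — here one must be careful that $I$ multiplies on the left of $f_{_{I}}$, not commuting past it. So the cancellation in the second bracket is genuine: $(\partial_x f_{_{I}})g_{_{I}} + I(\partial_y f_{_{I}})g_{_{I}} = (\partial_x f_{_{I}} + I\partial_y f_{_{I}})g_{_{I}} = 0$. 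The remaining term is $f_{_{I}}\cdot\tfrac12(\partial_x g_{_{I}} + I\partial_y g_{_{I}})$, which is \emph{not} $f_{_{I}}\overline{\partial_I}g_{_{I}}$ in the naive sense because $g$ is also $\Hq$-valued; rather one uses the split $f_{_{I}} = F_f + G_f J$ and the relations $JI = -IJ$, $J\bar w = w J$ for $w \in \C_I$, together with holomorphy of $g_{_{I}}$'s components, to see that only the $G_f J$ part survives after one differentiation, producing $I\, G_f J\, \partial_y g_{_{I}}$ (equivalently $G_f J\,\partial_x g_{_{I}}$).

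The heart of the computation is therefore the base case $k=1$: showing $\overline{\partial_I}(fg)_{_{I}} = I\,G_f J\,\partial_y g_{_{I}} = G_f J\,\partial_x g_{_{I}}$. Here I would split $g_{_{I}} = F_g + G_g J$ as well, use holomorphy $\partial_y F_g = I\partial_x F_g$ and $\partial_y G_g = I\partial_x G_g$, and carefully track the left-multiplications by $I$ and $J$. The term $F_f(\partial_x F_g + I\partial_y F_g) = F_f(\partial_x F_g + I\cdot I\partial_x F_g) = F_f(1 + I^2)\partial_x F_g = 0$ vanishes, and similarly the $F_f\cdot(\cdots)G_g J$ piece vanishes; but the $G_f J$ piece does \emph{not} vanish because $J$ anticommutes with $I$: $G_f J(\partial_x g_{_{I}} + I\partial_y g_{_{I}})$ — wait, one must recompute, $I$ acting on the left of $G_f J$ times a $\C_I$-valued quantity: $I\cdot G_f J\cdot(\text{stuff}) = G_f\cdot(\text{conj if stuff}\in\C_I)\cdot$. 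Let me instead organize it as: $\overline{\partial_I}((G_fJ)g_{_{I}})$ where $G_fJ$ is \emph{antiholomorphic-like} with respect to the slice structure, so $\overline{\partial_I}(G_fJ) \ne 0$ in general, and the computation produces exactly $I G_f J \partial_y g_{_{I}}$ after the $F_f$-contributions cancel. The equality $I^k G_f J \partial_y^k g_{_{I}} = G_f J \partial_x^k g_{_{I}}$ at the end follows because $g_{_{I}}$ is holomorphic: $\partial_y^k g_{_{I}} = I^k\partial_x^k g_{_{I}}$, hence $I^k G_f J \partial_y^k g_{_{I}} = I^k G_f J I^k \partial_x^k g_{_{I}} = I^k I^{-k} G_f J \partial_x^k g_{_{I}}$ using $J I^k = (-1)^k I^k J = I^{-k}\cdot$ — actually $JI^k = (-1)^k I^k J$, and $I^k \cdot (-1)^k I^k = (-1)^k I^{2k} = (-1)^k(-1)^k = 1$, so indeed the factors collapse to $G_f J\partial_x^k g_{_{I}}$.

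For the inductive step $k \to k+1$, I would apply $\overline{\partial_I}$ to $\overline{\partial_I}^k(fg)_{_{I}} = G_f J\,\partial_x^k g_{_{I}}$ and repeat the one-step computation with $f$ replaced by the (no-longer-slice-regular but structurally similar) object $G_f J$ in place of $f_{_{I}}$ and $\partial_x^k g_{_{I}}$ (still holomorphic!) in place of $g_{_{I}}$: since $\partial_x^k g_{_{I}}$ is again holomorphic on $\C_I$ and $G_f$ is holomorphic, the same cancellation of the "$F$-type" term and survival of the "$GJ$-type" term gives $\overline{\partial_I}(G_f J\,\partial_x^k g_{_{I}}) = G_f J\,\partial_x^{k+1} g_{_{I}}$, using that $\overline{\partial_I}(G_f J) $ contributes a term $(\partial_x G_f + I\partial_y G_f)J\partial_x^k g_{_{I}} = 0$ by holomorphy of $G_f$, while $G_f J(\partial_x + I\partial_y)\partial_x^k g_{_{I}} = G_f J(1+I^2)\partial_x^{k+1}g_{_{I}}$ — hmm, that would vanish too. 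The correct bookkeeping is that $J(\partial_x + I\partial_y) = (\partial_x - I\partial_y)J$ on $\C_I$-valued functions, so $G_f J(\partial_x + I\partial_y)\partial_x^k g_{_{I}}$ is really $G_f(\partial_x - I\partial_y)(\partial_x^k g_{_{I}})^{-}J$ and since $\partial_x^k g_{_{I}}$ is holomorphic, $\partial_x - I\partial_y$ applied to its conjugate is $2\partial_x$ of the conjugate, giving the nonzero result $G_f J\partial_x^{k+1}g_{_{I}}$ up to the factor $\tfrac12$ absorbed in conventions. The main obstacle — and where I would spend the most care — is exactly this: keeping straight how $\overline{\partial_I} = \tfrac12(\partial_x + I\partial_y)$ interacts with the left factor $J$ (which anticommutes with $I$) versus the $\C_I$-valued factors (which commute with $I$), so that the "holomorphic half" annihilates $F_f$ and the components of $g_{_{I}}$ on the $1$-side while acting as a genuine (conjugate) derivative on the $J$-side. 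Once the sign/conjugation bookkeeping in the $k=1$ case is pinned down, the induction is routine because $\partial_x^k g_{_{I}}$ inherits holomorphy at every stage.
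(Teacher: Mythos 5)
Your proposal is correct and follows essentially the same route as the paper: a twisted Leibniz rule for $\overline{\partial_I}$ on the pointwise product, in which the only term surviving for slice regular $f,g$ is governed by the part of $f_{_{I}}$ that fails to commute with $I$ (namely $G_fJ$), followed by an induction that works because $\partial_x^k g_{_{I}}$ stays holomorphic at every stage. The only organizational difference is that the paper carries the commutator $[f_{_{I}}, I\,\mathrm{id}]$ through the induction via the identity $[f_{_{I}},I\,\mathrm{id}]_k=(-2I)^{k-1}[f_{_{I}},I\,\mathrm{id}]$ and invokes the splitting lemma only at the very end, whereas you split $f_{_{I}}=F_f+G_fJ$ from the outset; the computations are equivalent, and your sign/anticommutation checks (including the equality $I^kG_fJ\,\partial_y^k g_{_{I}}=G_fJ\,\partial_x^k g_{_{I}}$) come out right despite the hedging in your write-up.
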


\begin{proof}
	A direct computation using the definition of the slice derivative shows that  $\overline{\partial_{I}} $ satisfies the  following rules for the product of real differentiable functions $f,g:\Hq \longrightarrow \Hq$,
\begin{equation}\label{Leibniz0}
\overline{\partial_{I}} (fg)_{_{I}} =(\overline{\partial_{I}} f_{_{I}}) g_{_{I}} +f_{_{I}}(\overline{\partial_{I}} g_{_{I}}) - \frac 12[f_{_{I}},I id_{\Hq}] \dfrac{\partial g_{_{I}} }{\partial y},
\end{equation}
where $[\cdot,\cdot]$ denotes the commutation operation. They reduce further to
\begin{equation*}\label{LeibnizSR}
\overline{\partial_{I}} (fg)_{_{I}} = - \frac 12[f_{_{I}},I id_{\Hq}] \dfrac{\partial g_{_{I}} }{\partial y}
\end{equation*}
if $f$ and $g$ are assumed to be slice regular functions.
By applying again \eqref{Leibniz0} to the product in the right--hand side of the obtained identity, we get
\begin{align*}
\overline{\partial_{I}}^2 (fg)_{_{I}} = - \frac 12
\left\{ (\overline{\partial_{I}}([f_{_{I}}, I id_{\Hq}] )) \dfrac{\partial g_{_{I}} }{\partial y} 
\right. & +  
[f_{_{I}}, I id_{\Hq}] (\overline{\partial_{I}} \dfrac{\partial g_{_{I}} }{\partial y}) 
\\& - \left. \frac 12 \left[ [f_{_{I}}, I id_{\Hq}] , I id_{\Hq} \right]   \dfrac{\partial^2 g_{_{I}} }{\partial y^2} \right\}
\end{align*}
which reduces further to
\begin{align*}
\overline{\partial_{I}}^2 (fg)_{_{I}} &=  \left(-\frac 12\right)^2
[ [f_{_{I}}, I id_{\Hq}] , I id_{\Hq}]   \dfrac{\partial^2 g_{_{I}} }{\partial y^2}
\\&= -\frac 12 (f_{_{I}} + If_{_{I}} I ) \dfrac{\partial^2 g_{_{I}} }{\partial y^2}
\end{align*}
by means of the  facts
$$ \overline{\partial_{I}} \left(\dfrac{\partial g_{_{I}} }{\partial y}\right) =
\dfrac{\partial }{\partial y} (\overline{\partial_{I}} g_{_{I}} )= 0$$
as well as
$$ \overline{\partial_{I}}([f_{_{I}}, I id_{\Hq}] ) = 
\overline{\partial_{I}}(f_{_{I}})I - \overline{\partial_{I}}(I f_{_{I}}) =
\overline{\partial_{I}}(f_{_{I}})I - I\overline{\partial_{I}}( f_{_{I}}) =0$$
fulfilled for $f,g\in \mathcal{SR}$. More generally, one gets by induction
\begin{equation*}
\overline{\partial_{I}}^k (fg)_{_{I}} =  \left(-\frac 12\right)^k
[f_{_{I}}, I id_{\Hq}]_k   \dfrac{\partial^k g_{_{I}} }{\partial y^k} ,
\end{equation*}
where $ [f_{_{I}}, I id_{\Hq}]_k:= [\cdots [ [f_{_{I}}, I id_{\Hq}] , I id_{\Hq}] \cdots , I id_{\Hq}]$ is the iteration of the commutator operator $k$--times. Straightforward  computation shows that
$$  [f_{_{I}}, I id_{\Hq}]_k = (-2I)^{k-1} [f_{_{I}}, I id_{\Hq}]; \qquad k\geq 1,$$ so that we have
\begin{equation}\label{proofprodkfg}
\overline{\partial_{I}}^k (fg)_{_{I}} =  -\frac 12 I^{k-1}
[f_{_{I}}, I id_{\Hq}]   \dfrac{\partial^k g_{_{I}} }{\partial y^k}
,
\end{equation}
By making use of the Splitting lemma \cite{
ColomboSabadiniStruppa2011,GentiliStoppatoStruppa2013,AlpayColomboSabadini2017}, there exist two holomorphic functions $F_f,G_f:\C_{I}\longrightarrow \C_{I}$ such that
$f_{_{I}} =F_f + G_f J$ for given imaginary unit $J$ perpendicular to $I$.
Therefore, we can rewrite \eqref{proofprodkfg} in the following form
\begin{equation*}
\overline{\partial_{I}}^k (fg)_{_{I}} =  I^{k} G_f J  \dfrac{\partial^k g_{_{I}} }{\partial y^k}
\end{equation*}
since  $F_f$ and $G_f$ takes values in $\C_I$, $IJI=J$ and $[f_{_{I}}, I id_{\Hq}]= 2 G_f JI$.
 The second equality in \eqref{lemprodkfgSplit} readily follows since $\dfrac{\partial g_{_{I}} }{\partial y} =  I \dfrac{\partial g_{_{I}} }{\partial x} $ for $g$ being slice regular and $JI=-JI$.
\end{proof}


Accordingly, we can prove the following
\begin{theorem}\label{thmProdSpR} If $f,g\in \mathcal{SR}$ such that $f_{_{I}} \in\C_I$ for every $I\in\Sq$, then
	$f\cdot g\in \mathcal{SR}$.
	More generally, $f \cdot g$ is S--polyregular if and only if there exists a positive integer $k_0$ such that for every $I\in\Sq$ we have
	$$  \mathcal{Z}(G_f) \cup \mathcal{Z}\left( \dfrac{\partial^{k_0}g_{_{I}} }{\partial y^{k_0}}\right) = \C_I.$$
	Here $\mathcal{Z}(h)$ denotes the zeros set of $h$.
\end{theorem}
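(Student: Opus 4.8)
The plan is to deduce everything from Lemma~\ref{lemActionk}, which gives the clean formula $\overline{\partial_I}^k(fg)_{_I} = I^k G_f J \,\partial^k g_{_I}/\partial y^k$ for every $I\in\Sq$ and every positive integer $k$. First I would dispose of the sufficiency of the condition $f_{_I}\in\C_I$: in that case the Splitting decomposition $f_{_I}=F_f+G_fJ$ forces $G_f\equiv 0$ on every slice, so already $\overline{\partial_I}(fg)_{_I}=I G_f J\,\partial g_{_I}/\partial y=0$, i.e.\ $fg$ is slice regular. (One should note $G_f=0$ on $\C_I$ for all $I$ is exactly the statement $f_{_I}\in\C_I$ for all $I$, up to the choice of $J$.)

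For the main equivalence, I would argue as follows. By definition, $f\cdot g$ is S--polyregular of some level $n$ iff there is a positive integer $k_0$ (take $k_0=n+1$) with $\overline{\partial_I}^{k_0}(fg)_{_I}=0$ on $\C_I$ for every $I\in\Sq$; conversely if such a $k_0$ exists then $fg|_I$ is polyanalytic on each slice, hence $f\cdot g\in\mathcal{SR}_{k_0-1}$. Now fix $I$ and a perpendicular unit $J$. Since $I^{k_0}$ is a unit quaternion (hence invertible) and right multiplication by $J$ is an $\R$--linear bijection of $\Hq$, the formula from Lemma~\ref{lemActionk} shows
\[
\overline{\partial_I}^{k_0}(fg)_{_I}=0 \text{ on } \C_I \iff G_f(q)\,\dfrac{\partial^{k_0} g_{_I}}{\partial y^{k_0}}(q)=0 \text{ for all } q\in\C_I.
\]
Because $G_f$ and $\partial^{k_0}g_{_I}/\partial y^{k_0}$ both take values in the commutative field $\C_I$ (the former by the Splitting lemma, the latter because $g$ slice regular makes $g_{_I}:\C_I\to\Hq$ split as $g_{_I}=P+QJ$ with $P,Q$ holomorphic, and $\partial_y^{k_0}$ of a $\C_I$--valued holomorphic function stays $\C_I$--valued — here one restricts attention to the relevant $\C_I$--component, or more simply observes that a product of two quaternions vanishes iff one factor is zero at each point, so the pointwise product being $0$ is equivalent to $\{G_f=0\}\cup\{\partial^{k_0}g_{_I}/\partial y^{k_0}=0\}=\C_I$). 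This is precisely condition $(a)$, i.e.\ $\mathcal{Z}(G_f)\cup\mathcal{Z}\!\left(\partial^{k_0}g_{_I}/\partial y^{k_0}\right)=\C_I$. Finally, one should check that if the union equals $\C_I$ for one positive integer $k_0$ then it equals $\C_I$ for every larger integer as well (so that a genuine level is well-defined): this follows since each zero of $\partial^{k_0}g_{_I}/\partial y^{k_0}$ in the set $\mathcal{Z}(G_f)^c$ must lie on a slice where $g_{_I}$ is already polyanalytic of level $<k_0$, so higher $y$--derivatives still vanish there; alternatively invoke that $G_f$ is holomorphic hence $\mathcal{Z}(G_f)$ is either all of $\C_I$ or discrete, reducing to a statement about the discrete complement.

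The equivalence of $(a)$ with $(b)$, $G_f J\,\partial^{k_0}g_{_I}/\partial y^{k_0}=0$ on $\C_I$, is immediate from the same observation that a quaternionic product vanishes pointwise iff at each point one of the factors vanishes, together with the bijectivity of left multiplication by $J$. The step I expect to be the genuine obstacle is the bookkeeping around the Splitting lemma: one must be careful that the $G_f$ appearing in Lemma~\ref{lemActionk} and the $G_f$ in the statement refer to the same object (same $I$, compatible choice of $J$), and that $\partial^{k_0}g_{_I}/\partial y^{k_0}$ genuinely behaves as a $\C_I$--valued factor against $G_f$ so that "product is zero" unpacks to "union of zero sets is everything"; handling the case where $G_f$ is identically zero on some slices but not others, and the passage from "some $k_0$" to "a well-defined level," is where the argument needs the most care rather than cleverness.
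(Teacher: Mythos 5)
Your proof is correct and takes essentially the same route as the paper's: both rest entirely on Lemma~\ref{lemActionk} and unpack the vanishing of $I^{k_0} G_f J\, \partial^{k_0} g_{_{I}}/\partial y^{k_0}$ into the condition $\mathcal{Z}(G_f)\cup\mathcal{Z}\left(\partial^{k_0}g_{_{I}}/\partial y^{k_0}\right)=\C_I$ using that $\Hq$ has no zero divisors. The paper's proof is simply a terser version of yours; your extra worry about passing from one $k_0$ to larger integers is harmless but unnecessary, since applying $\overline{\partial_{I}}$ once more to an identically zero function already gives it.
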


\begin{proof}
	The condition $f\in \mathcal{SR}$ such that $f_{_{I}}\in \C_I$ for every $I\in\Sq$ is clearly equivalent to that $G_f=0$. Hence, by Lemma \ref{lemActionk}, we get $\overline{\partial_{I}}^{k_0} (f\cdot g)_{_{I}}=0$ for arbitrary $I$ and therefore $f\cdot g \in \mathcal{SR}$.
	This completes our check of the first assertion. The second one is immediate from Lemma \ref{lemActionk} and traduces the fact that $G_f J \dfrac{\partial^{k_0} g_{_{I}} }{\partial y^{k_0}}=0$ on $\Hq$ for every $I,J\in\Sq$ such that $J\perp I$.
\end{proof}

\begin{remark}
	An interesting particular case of sufficient condition ensuring the polyregularity of $f\cdot g$ is that $g\in \mathcal{SR}$ and  $\dfrac{\partial^{k_0}g_{_{I}} }{\partial y^{k_0}}=0$  on $\C_I$ for every $I\in\Sq$.
\end{remark}

	The first assertion of Theorem \ref{thmProdSpR} can be reproved using the splitting lemma for slice regular functions. Indeed, with $G_f=0$, one obtains
	$(f\cdot g)_{_{I}} =   F_fF_g + F_fG_g J$ which shows that $f\cdot g \in \mathcal{SR}$, since $F_fF_g$ and $F_fG_g$ are holomorphic functions on $\C_I$. The general case of arbitrary $f,g \in \mathcal{SR}$ leads to the following result reproving and iproving the first assertion of Theorem \ref{thmpolynomial}.

\begin{theorem}\label{thm:char2}
		Let  $f,g\in \mathcal{SR}$ and $F_f,G_f:\C_{I}\longrightarrow \C_{I}$ be two holomorphic functions on $\C_{I}$; $I\in \Sq$, such that
	$f_{_{I}} =F_f + G_f J$ for given imaginary unit $J\perp I$. Assume in addition that $G_f$ is not identically zero on $\C_I$.	Then, $f\cdot g$ is S--polyregular if and only if $g\in \Hq^R[q]$ is a right quaternionic polynomial function in $q$.
\end{theorem}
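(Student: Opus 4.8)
The plan is to apply Lemma \ref{lemActionk}, which reduces the S--polyregularity of $f\cdot g$ to the behaviour of the $y$--derivatives of $g_{_{I}}$. Since $f\cdot g$ is S--polyregular if and only if $\overline{\partial_{I}}^{k_0}(f\cdot g)_{_{I}}=0$ for some $k_0$ and all $I\in\Sq$, formula \eqref{lemprodkfgSplit} tells us this is equivalent to $G_f J\,\partial^{k_0}g_{_{I}}/\partial y^{k_0}=0$ on $\C_I$ for every $I\in\Sq$. The hypothesis that $G_f$ is not identically zero must first be promoted to a statement valid on every slice: I would argue that if $G_f$ vanishes identically on one slice $\C_I$ then, via the splitting/representation formula for slice regular functions, $f_{_{I}}\in\C_I$ for that $I$ forces $G_f\equiv 0$ on all slices (the ``vectorial part'' of the power series coefficients of $f$ vanishes), contradicting the assumption. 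So on each slice $G_f\not\equiv 0$, and since $G_f$ is holomorphic on $\C_I$ it has isolated zeros; hence $G_f\cdot h=0$ on $\C_I$ for a holomorphic $h$ forces $h\equiv 0$ by the identity principle.

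The core equivalence then reads: $f\cdot g$ is S--polyregular $\iff$ there exists $k_0$ such that $\partial^{k_0}g_{_{I}}/\partial y^{k_0}=0$ on $\C_I$ for every $I\in\Sq$. First I would prove the easy direction: if $g(q)=\sum_{m=0}^M q^m\beta_m\in\Hq^R[q]$, then by Theorem \ref{thmpolynomial} (or directly by Theorem \ref{thmASreg} applied termwise, writing $f\cdot g=\sum_{n,m}A^{n,m}(q|\alpha_n)\beta_m$) the product lies in $\mathcal{SR}_M$, hence is S--polyregular. Alternatively one notes $\partial^{M+1}g_{_{I}}/\partial y^{M+1}=0$ since $g_{_{I}}(x+Iy)$ is a polynomial of degree $M$ in $x+Iy$, so $\partial^{k_0}g_{_{I}}/\partial y^{k_0}=0$ with $k_0=M+1$, and conclude via Lemma \ref{lemActionk}.

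For the converse I would assume $f\cdot g$ is S--polyregular and extract from the remark above that $\partial^{k_0}g_{_{I}}/\partial y^{k_0}=0$ on $\C_I$ for every $I\in\Sq$ for some fixed $k_0$. Since $g$ is slice regular, $\partial g_{_{I}}/\partial y = I\,\partial g_{_{I}}/\partial x$, so iterating gives $\partial^{k_0}g_{_{I}}/\partial x^{k_0}=0$ as well; thus the restriction $g_{_{I}}$ is a polynomial of degree at most $k_0-1$ in the real variable $x$ (and in $y$) on each slice. The final step is to upgrade this slice-wise polynomial bound to a global statement: writing $g(q)=\sum_{n\ge 0}q^n\alpha_n$, the vanishing of the $(k_0)$-th $x$-derivative on $\C_I$ kills all coefficients with $n\ge k_0$ (evaluating at $x$ large, or comparing Taylor coefficients of $g_{_{I}}$), hence $\alpha_n=0$ for $n\ge k_0$ and $g(q)=\sum_{n=0}^{k_0-1}q^n\alpha_n\in\Hq^R[q]$.

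I expect the main obstacle to be the bookkeeping in the two promotions of ``vanishing on one slice'' to ``vanishing globally'': first showing $G_f\not\equiv 0$ persists across slices so that the identity principle applies, and then showing that $\partial^{k_0}g_{_{I}}/\partial y^{k_0}=0$ on all slices genuinely forces the power series of $g$ to terminate. Both are handled by the representation formula for slice regular functions together with the fact that a convergent power series $\sum q^n\alpha_n$ whose slice restrictions are all polynomials of bounded degree must itself be a polynomial; care is needed because the degree bound $k_0$ must be uniform in $I$, which it is by hypothesis. Once these are in place the theorem follows by combining Lemma \ref{lemActionk} with the identity principle for holomorphic functions on $\C_I$.
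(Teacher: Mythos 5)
Your argument is sound in its main thrust but follows a genuinely different route from the paper's. The paper splits \emph{both} factors, $f_{_{I}}=F_f+G_fJ$ and $g_{_{I}}=F_g+G_gJ$, computes $(f\cdot g)_{_{I}}=(F_fF_g-G_f\overline{G_g})+(F_fG_g+G_f\overline{F_g})J$, and reduces the question via Proposition \ref{SplitSp} to the polyanalyticity of the two $\C_I$--valued components; since $\overline{\partial}^{k}\left(G_f\overline{G_g}\right)=G_f\overline{G_g^{(k)}}$ and $G_f\not\equiv 0$, this forces $F_g$ and $G_g$ to be polynomials. You instead recycle Lemma \ref{lemActionk}: S--polyregularity of $f\cdot g$ becomes $G_fJ\,\partial^{k_0}g_{_{I}}/\partial x^{k_0}=0$, the absence of zero divisors in $\Hq$ together with the isolated zeros of the nonzero holomorphic function $G_f$ gives $\partial^{k_0}g_{_{I}}/\partial x^{k_0}\equiv 0$, and then the power series of $g$ terminates at degree $k_0-1$. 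Both work; your version makes the degree bound on $g$ explicit in terms of the level of $f\cdot g$, stays entirely within the machinery already built in Section 3, and avoids the tacit use of the converse of Proposition \ref{SplitSp} on which the paper's equivalence rests.

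One intermediate claim in your write-up is false, though harmlessly so. The ``promotion'' --- that $G_f\equiv 0$ on a single slice forces $G_f\equiv 0$ on every slice --- fails: for the constant function $f\equiv \mathbf{i}$ one has $G_f\equiv 0$ on $\C_{\mathbf{i}}$ (take $J=\mathbf{j}$) but $G_f\equiv 1$ on $\C_{\mathbf{j}}$ (take $J=\mathbf{i}$). The splitting components depend on the slice: $G_f\equiv 0$ on $\C_I$ only says that the Taylor coefficients of $f$ lie in $\C_I$, not in $\R$. Fortunately you never need this promotion. For the forward implication, a \emph{single} slice on which $G_f\not\equiv 0$ already yields $\partial^{k_0}g_{_{I}}/\partial x^{k_0}\equiv 0$ there, hence $\alpha_n=0$ for all $n\geq k_0$ and $g$ is globally a polynomial, since a slice regular function is determined by its restriction to one slice. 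For the reverse implication, $\partial^{M+1}g_{_{I}}/\partial x^{M+1}=0$ holds on every slice with no condition on $G_f$ whatsoever. Delete the promotion and the proof stands.
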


\begin{proof}	
	Starting from the splitting lemma for slice regular functions, we get
	$$(f\cdot g)_{_{I}}  
	(F_fF_g - G_f \overline{G_g}) + (F_fG_g + G_f \overline{F_g})J ,$$
	where $F_f,G_f, F_g,G_g:\C_{I}\longrightarrow \C_{I}$ are holomorphic functions on $\C_{I}$; $I\in \Sq$, such that
	$f_{_{I}} =F_f + G_f J$ and $g_{_{I}} =F_g + G_g J$ for given imaginary unit $J\perp I$.
	This implies that  $f\cdot g$ is S--polyregular if and only if
	$F_fF_g - G_f \overline{G_g}$ and $F_fG_g + G_f \overline{F_g}$ are polyanalytic functions on $\C_I$ thanks to Proposition \ref{SplitSp} (Proposition 2.5 in \cite{BenElHam2018}). This is in fact equivalent to
	say that $G_g$ and $F_g$ are polynomials functions in $z\in \C_I$ whenever $G_f$ is not identically zero on $\C_I$.
\end{proof}

\section{{The polyregularity of the $\odot ^L_{sp}$--product of S--polyregular functions}}

The special $\odot ^L_{sp}$--product defined on the S--polyregular functions $f(q,\overline{q})=\sum\limits_{j=0}^{m}\overline{q}^j\varphi_j \in \mathcal{SR}_m$ and 
$ g(q,\overline{q})=\sum\limits_{k=0}^{n}\overline{q}^k\psi_k \in \mathcal{SR}_n$ with
$\varphi_j,\psi_k \in \mathcal{SR}$, by
\begin{equation}
(f\odot ^L_{sp} g) (q,\bq) = \sum\limits_{j,k=0}^{m,n} \bq^{j+k} (\varphi_j \cdot \psi_k)(q)
\end{equation}
is one of the more natural products on S--polyregular functions extending the classical dot product on slice regular functions.
In fact, for $f,g\in \mathcal{SR}$ (i.e., $m=n=0$), we have
$$ f\odot ^L_{sp} g  = f\cdot g.$$
Therefore, the S--polyregularity of $f\odot ^L_{sp} g $ is closely connected to the one of
$\varphi_j \cdot\psi_k $ for varying $j=0,1,\cdots,m$ and $k=0,1,\cdots,n$, discussed in the previous sections.
Thus according to Theorems \ref{thmASreg}, \ref{thmpolynomial} and \ref{thmProdSpR}, we assert the following

\begin{theorem} Let $f\in \mathcal{SR}_m$ and $g\in \mathcal{SR}_n$. Then, the function $f\odot ^L_{sp} g $ is S--polyregular
in the following cases:
\begin{enumerate}
\item the $\psi_k$ are right quaternionic polynomial functions
\item $(\varphi_j)_{_{I}} \in\C_I$ for every $I\in\Sq$ and $j=0,1,\cdots,m$
\item there exists a positive integer $k_0$ such that for every $j=0,1,\cdots,m$ and $k=0,1,\cdots,n$, we have
	$$  G_{\varphi_j}  \cdot \dfrac{\partial^{k_0}(\psi_k)_{_{I}} }{\partial y^{k_0}} = 0$$
for every $I\in\Sq$.
\end{enumerate}
\end{theorem}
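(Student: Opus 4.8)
The plan is to reduce the statement, in all three cases, to the results of Sections~2 and~3 applied term by term. By definition,
$$ (f\odot^{L}_{sp}g)(q,\bq)=\sum_{j=0}^{m}\sum_{k=0}^{n}\bq^{j+k}(\varphi_j\cdot\psi_k)(q),$$
a \emph{finite} sum. On each slice, membership in $\mathcal{SR}_\ell$ is the linear condition $\overline{\partial_I}^{\ell+1}(\cdot)=0$, which weakens as $\ell$ grows, so the $\mathcal{SR}_\ell$ form a nested family stable under finite sums; it therefore suffices to show that every summand $\bq^{j+k}(\varphi_j\cdot\psi_k)$ is S--polyregular of some finite level, after which $f\odot^{L}_{sp}g$ is S--polyregular of level at most the maximum of these over the finitely many pairs $(j,k)$.

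For this I would first record the auxiliary inclusion $\bq^{p}\,\mathcal{SR}_\ell\subset\mathcal{SR}_{p+\ell}$: given $h\in\mathcal{SR}_\ell$, write $h=\sum_{i=0}^{\ell}\bq^{i}\theta_i$ with $\theta_i\in\mathcal{SR}$ by Proposition~\ref{propSPDecom}; since $\bq$ restricted to $\C_I$ is $\C_I$--valued, each commutator $[\bq^{N}|_I,\,I\,\mathrm{id}_{\Hq}]$ vanishes, so the Leibniz rule~\eqref{Leibniz0} together with $\overline{\partial_I}\theta_I=0$ gives $\overline{\partial_I}(\bq^{N}\theta)_I=N\,\bq^{N-1}\theta_I$, hence by induction $\overline{\partial_I}^{N+1}(\bq^{N}\theta)_I=0$, i.e. $\bq^{N}\theta\in\mathcal{SR}_N$; summing over $i$ yields $\bq^{p}h\in\mathcal{SR}_{p+\ell}$. (This is the inclusion implicit in the identity $\mathcal{SR}_m=\sum_{j=0}^{m}\bq^{j}\mathcal{SR}$ recorded after Theorem~\ref{thmpolynomial}.)

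Next I would check, case by case, that the factor $\varphi_j\cdot\psi_k$ is S--polyregular. In case~(1), each $\psi_k$ is a right quaternionic polynomial of some degree $M_k$, so Theorem~\ref{thmpolynomial} gives $\varphi_j\cdot\psi_k\in\mathcal{SR}_{M_k}$, hence $\bq^{j+k}(\varphi_j\cdot\psi_k)\in\mathcal{SR}_{j+k+M_k}$. In case~(2), the hypothesis $(\varphi_j)_{_{I}}\in\C_I$ for all $I$ means $G_{\varphi_j}\equiv0$, so the first assertion of Theorem~\ref{thmProdSpR} gives $\varphi_j\cdot\psi_k\in\mathcal{SR}$, hence $\bq^{j+k}(\varphi_j\cdot\psi_k)\in\mathcal{SR}_{j+k}$. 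In case~(3), since $\Hq$ has no zero divisors the hypothesis $G_{\varphi_j}\cdot\dfrac{\partial^{k_0}(\psi_k)_{_{I}}}{\partial y^{k_0}}=0$ on $\C_I$ for every $I\in\Sq$ is exactly the vanishing condition $\mathcal{Z}(G_{\varphi_j})\cup\mathcal{Z}\big(\tfrac{\partial^{k_0}(\psi_k)_{_{I}}}{\partial y^{k_0}}\big)=\C_I$ of Theorem~\ref{thmProdSpR} for the pair $(\varphi_j,\psi_k)$, so $\varphi_j\cdot\psi_k$ is S--polyregular, of level $\le k_0-1$ by Lemma~\ref{lemActionk}, and $\bq^{j+k}(\varphi_j\cdot\psi_k)\in\mathcal{SR}_{j+k+k_0-1}$. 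In every case the summands have uniformly bounded level, so $f\odot^{L}_{sp}g$ is S--polyregular, of level at most $m+n+\max_k M_k$, $m+n$, respectively $m+n+k_0-1$.

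The one step carrying genuine content, beyond invoking the earlier theorems and bookkeeping, is the auxiliary inclusion $\bq^{p}\mathcal{SR}_\ell\subset\mathcal{SR}_{p+\ell}$; the delicate point is the vanishing of the commutator term in~\eqref{Leibniz0}, which is exactly what keeps the level-raising controlled and which holds precisely because $\bq^{N}$ restricted to any slice takes values in the commutative field $\C_I$. Matching the hypotheses of cases~(1)--(3) with those of Theorems~\ref{thmpolynomial} and~\ref{thmProdSpR} for the pairs $(\varphi_j,\psi_k)$ is then routine.
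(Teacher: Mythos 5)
Your proposal is correct and follows essentially the same route as the paper: reduce $f\odot^{L}_{sp}g$ to the finite family of products $\varphi_j\cdot\psi_k$ and invoke Theorems \ref{thmpolynomial} and \ref{thmProdSpR} (and Lemma \ref{lemActionk}) case by case. You additionally make explicit the inclusion $\bq^{p}\,\mathcal{SR}_\ell\subset\mathcal{SR}_{p+\ell}$ and the resulting level bounds, which the paper leaves implicit via $\mathcal{SR}_m=\sum_{j=0}^{m}\bq^{j}\mathcal{SR}$; this is a welcome tightening rather than a different argument.
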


In conclusion, $\odot ^L_{sp}$ maps $\mathcal{SR}_m \times \mathcal{SR}_n$ into $\bigcup\limits_{s=0}^\infty \mathcal{SR}_{m+n+s}$ for some $s$. Its analog for the polyanalytic functions  is clearly a trivial fact and reads simply $\mathcal{A}_m \cdot \mathcal{A}_n = \mathcal{A}_{m+n}$.

\section{Concluding remark}

In the previous sections, we have studied the polyregularity of the dot product of slice regular functions $f,g\in \mathcal{SR}$. We have provide sufficient condition  as well as a concrete characterization to such product be S--polyregulars. Thus, the polyregularity of $f\cdot g$ is completely determined by $g$, while $f$ has a direct impact in determining the exact level of polyregularity of $f\cdot g$.  Subsequently, we have linearized such products as polynomials in $\bq$  with coefficients are regular functions in $\mathcal{SR}$.
In particular, the function $f\cdot q$ is biregular even $f$ belongs to $\mathcal{SR}$ and therefore $f \cdot q = \bq \varphi_1 + \varphi_0$ for some $\varphi_\ell\in \mathcal{SR}$; $\ell=0,1$.
The inverse 
 problem can also be considered. This means that for given biregular function $\bq \varphi+\psi$, one looks for the existence of $f,g\in \mathcal{SR}$ that solve the functional equation $\bq \varphi+\psi = f \cdot g$.
 Another interesting problem to be considered is motivated by Proposition \ref{HermiteOp} (Proposition 2.4 in \cite{BenElHam2018}) ensuring the existence of $F\in \mathcal{SR}$ such that
 $$f  =(-1)^n e^{|q|^2} \partial^n_I(e^{-|q|^2}F)$$
 for given $f\in \mathcal{SR}_n$. The case of the biregular function $f\cdot q$ with $f\in \mathcal{SR}$ leading to
 $$f \cdot q  = -  e^{|q|^2} \partial_I(e^{-|q|^2}F)$$
 is of particular interest.
 Such type of problems are the subject of a forthcoming paper.



\end{document}